\theoremstyle{definition}
\newtheorem{theorem}{Theorem}[section] 
\newtheorem{example}[theorem]{Example}
\newtheorem{definition}[theorem]{Definition}
\newtheorem{proposition}[theorem]{Proposition}
\newtheorem{corollary}[theorem]{Corollary}
\newtheorem{lemma}[theorem]{Lemma}
\theoremstyle{remark}
\newtheorem*{remark}{Remark}
\newcommand{\B}{\mathscr{B}}
\newcommand{\G}{\mathscr{G}}
\newcommand{\Z}{\mathbb{Z}}
\newcommand{\R}{\mathbb{R}}
\newcommand{\Q}{\mathbb{Q}}
\newcommand{\N}{\mathbb{N}}
\title{An analogue of Solomyak's theorem for periodic Cantor real expansions in alternate bases}
\author{Jonathan Caalim \thanks{jcaalim@math.upd.edu.ph} }
\author{Nathaniel Nollen \thanks{nnollen@math.upd.edu.ph}}
\affil{\small{University of the Philippines - Diliman, Quezon City, Philippines 1101}}
\date{}
\begin{document}

\maketitle
\begin{abstract}
    
In this paper, we consider the positional numeration system, called the Cantor real expansion, on the unit interval $[\gamma, \gamma+1]$, where $\gamma \in \mathbb{R}$, with respect to an alternate base (i.e., a base which is a purely periodic sequence of real numbers). In particular, we study the case where the expansion of $\gamma+1$ is periodic. Under certain assumptions, the base satisfies algebraic properties. We compute the bounds for the norms of the nontrivial Galois conjugates associated with the base; thereby, extending the results of Solomyak on the classical beta expansions.
\\

\noindent \textit{Mathematics Subject Classification 2020:}  11A63, 11R09 \\
\noindent \textit{Keywords:} generalized Cantor series expansion, beta expansions
\end{abstract}

\section{Introduction}

Let $Q=(q_n)_{n \ge 1}$ be a sequence of integers $q_n \ge 2$. 
The $Q$-Cantor series expansion \cite{Cantor} of a real number $x$ is the unique sum of the form
$$x=E_0 + \sum\limits_{n \ge 1} \frac{E_n}{ \Pi_{j=1}^n{q_{j}}} $$
where $E_0 = \lfloor x \rfloor$ and $E_n \in \{0,1, \dots, q_n-1\}$ for all $n \ge 1$ such that 
$E_n \ne q_n-1$ infinitely many number of times. This positional numeration system may be generalized by taking the radix $Q$ to be a real sequence (see \cite{Dem2020,charlier2021expansions}). 

    Let $B=(\beta_1,\beta_2, \dots) $ be a sequence of real numbers $\beta_i$. Let  $\gamma \in \mathbb{R}$. For $j\in \mathbb{N}$, we define self-maps $f_j$ and $T_j$ on $[\gamma,\gamma+1)$ by    
    \begin{align*}
f_j(x) &= \beta_j x- \left\lfloor \beta_j x - \gamma \right\rfloor\\
&=\{\beta_j x - \gamma\} +\gamma
    \end{align*}
and 
\[
 T^{m}(x)=  f_m \circ \dots \circ f_3 \circ f_2 \circ f_1 (x). 
 \]
Putting
\[a_m (x) =  \left\lfloor \beta_m  T^{m-1} (x)-\gamma \right\rfloor,\]
we get
\[T^m (x) = \beta_m  T^{m-1} (x) - a_m (x).\]
We write $T_{B}$ or $T_{B,\gamma}$ instead of $T$ whenever we need to specify $B$ or $\gamma$. 
Let
$B[i,j] := \Pi_{m=i}^{j} {\beta_{m}}$ and $B[j]:=B[1, j]$ with $B[0]:=1$.  
If $\lim_{m \rightarrow \infty} |B[m]| =\infty$, a real number  $x\in [\gamma,\gamma+1)$ has the Cantor real expansion (or, simply, $B$-expansion)
\[d(B;x):=(a_1(x), a_2(x), \dots)\]
and 
\[x=\sum_{m=1}^{\infty}\frac{a_m(x)}{B[m]}=:(a_1, a_2, \dots)_B.\]
If $B=(\overline{\beta_1, \beta_2, \dots, \beta_N})$ is periodic, we also refer to $d(B;x)$
as the $(\beta_1,\dots, \beta_N)$-expansion of $x$. We call the base $B$ an alternate base and write
$d(B;x) = d(\beta_1,\dots, \beta_N; x)$.

When $B$ is an alternate base, Charlier et.al. \cite{charlier2023spectrum} provided a necessary condition for the expansion of 1 (here, $\gamma =0$) to be eventually periodic. This result extends the work of Parry  \cite{Parry1960} on the classical \textit{beta expansion}  (which was introduced by R{\'{e}}nyi in \cite{Rnyi1957}).
In particular, Parry showed that if the beta expansion of 1 is eventually periodic (i.e. the base $\beta$ is a Parry number), then $\beta$ is an algebraic.

In fact, Parry also showed if $\lambda\neq\beta$ is a Galois conjugate of $\beta$, then $|\lambda|\leq 2$. 
Solomyak \cite{Solomyak1994} improved this to $|\lambda|\le (1+\sqrt{5})/2$ and proved that this bound is sharp. 

An analog of the result of Solomyak exists for other numeration systems related to beta expansion.
In \cite[Theorem 3.7]{Thompson2017},  Thompson gave an analog of the result of Solomyak for the class of so-called `generalized $\beta$-maps'. Suzuki considered the same problem but restricted to two subclasses of these $\beta$-maps which include a version of negative $\beta$-transformations (see  \cite[Corollary 2.4, Theorem 2.5]{suzuki2021galois}). Negative beta transformations were introduced by Ito and Sadahiro in \cite{Sadahiro}
 to allow negative real numbers as radices for beta expansions. 
        
The goal of this article to provide an analog of the result of Solomyak for Cantor real expansion when $B$ is an alternate base.

\section{Periodic $B$-expansions} \label{periodic}

The $m$th digit $a_m(x)$ of the $B$-expansion $d(B;x)$ is an element of $\mathcal{A}(\beta_m)$
where 
\[\mathcal{A}(\beta) := \begin{cases} 
   [u_\beta, v_\beta) \cap \mathbb{Z}   & \text{if $\beta>0$ and $\beta +\gamma(\beta-1) \in \mathbb{Z}$}\\
   [u_\beta, v_\beta] \cap \mathbb{Z}   & \text{otherwise}
\end{cases}\]
and 
\[u_\beta:=\min\{ \left\lfloor \beta\gamma  -\gamma \right\rfloor, \left\lfloor \beta(\gamma+1)  -\gamma \right\rfloor \},\]
\[v_\beta:=\max\{ \left\lfloor \beta\gamma  -\gamma \right\rfloor, \left\lfloor \beta(\gamma+1)  -\gamma \right\rfloor \}\]
for $\beta= \beta_m$.
Thus,
\[d(B;x)\in \mathcal{A}(B):= \mathcal{A}(\beta_1) \times \mathcal{A}(\beta_2) \times \cdots .\]

As has been done in \cite{Parry1960, Sadahiro}, 
we define the $B$-expansion of $\gamma+1$ in two ways:
\begin{enumerate}
\item Define $T(\gamma+1):\beta_1(\gamma+1)-\lfloor \beta_1 (\gamma+1)-\gamma \rfloor \in [\gamma,\gamma+1)$ and obtain the expansion $d(B;\gamma+1)$ as usual. If $d(B;\gamma+1)=(c_1,c_2,\dots)$, then the first digit $c_1\in [u_{\beta_1},v_{\beta_1}] \cap \mathbb{Z}$ and
$c_j\in \mathcal{A}(\beta_j)$ for $j\ge 2$.

\item Define $d^*(B;\gamma+1) := \lim_{\varepsilon \rightarrow 0^+}d(B;\gamma+1-\varepsilon).$ Note that $d^*(B;\gamma+1) \in \mathcal{A}(B)$.
\end{enumerate}
If $d(B;\gamma+1)$ or $d^*(B;\gamma+1)$ is equal to $(d_1,d_2,\dots)$, then
$\displaystyle{ \gamma+1 = \sum_{i=1}^{\infty} \frac{d_i} {B[i]} }$.

Let $B=(\overline{\beta_{1},\beta_{2},\dots, \beta_{n}})$ be an alternate base of length $n$.
Suppose that the $B$-expansion of $x \in [\gamma, \gamma+1)$ is eventually periodic 
with $k$ and $m$ as the lengths of the pre-periodic and periodic parts, respectively; that is
\[x=(a_1,a_2, \dots, a_k, \overline{a_{k+1},a_{k+2}, \dots a_{k+m}})_B.\]
Without loss of generality, we may assume that both $k$ and $m$ are divisible by $n$ since
\[(a_1,a_2, \dots, a_k, \overline{a_{k+1},a_{k+2}, \dots a_{k+m}})=(a_1,a_2,\dots,a_{nk}, \overline{a_{nk+1},\dots,a_{nk+nm}}).\]
Let
\begin{align*}
y_j:&= \left(\frac{a_{n(j-1)+1}}{B[1]}+ \frac{a_{n(j-1)+2}}{B[2]}+\dots +  \frac{a_{nj}}{B[n]}\right)B[n]\\
&=\sum_{i=0}^{n-1}a_{nj-i}B[n+1-i,n],
\end{align*}
with $B[n+1,n]=1$.
Then
    \begin{align*}
            x&=\sum_{j=1}^{k}\dfrac{y_j}{(B[n])^j}+
            \left(\sum_{j=k+1}^{k+m}\dfrac{y_j}{(B[n])^j}
            \right)
            \left(\sum_{j=0}^\infty \dfrac{1}{(B[n])^{mj}}\right)\\
            &= \sum_{j=1}^{k}\dfrac{y_j}{(B[n])^j}+
            \left(\sum_{j=k+1}^{k+m}\dfrac{y_j}{(B[n])^j}
            \right)
            \left(\dfrac{(B[n])^{m}}{(B[n])^{m}-1}\right).
        \end{align*}
Note that above infinite geometric series is convergent since $\lim_{j\to\infty}|B[j]|=\infty$, and consequently, $|B[n]|>1$.      
Hence, we have   
\[
           x \left((B[n])^m-1\right) = \sum_{j=1}^{k+m} y_j(B[n])^{m-j} - \sum_{j=1}^{k}\dfrac{y_j}{(B[n])^j}.
\]
Therefore,
\begin{equation}
           x \left((B[n])^{m+k}-(B[n])^{k}\right) = \sum_{j=1}^{m+k} y_j(B[n])^{m+k-j} - \sum_{j=1}^{k} y_j(B[n])^{k-j}.\label{eq:1}
\end{equation}

Hence, we have the following result on the algebraicity of $\beta_i$ over some number field under certain conditions. See also \cite[Theorems 14 and 19]{charlier2023spectrum} for a parallel statement.

\begin{proposition} \label{algebraic}
Let $K_i:=\Q(\beta_1,\dots,\beta_{i-1},\beta_{i+1},\beta_n)$. Let $x\in K_i\cap [\gamma, \gamma+1]$. If $d(B;x)$ or $d^*(B;x)$ is eventually periodic such that $T^{i-1}(x)\neq0$, then $\beta_i$ is algebraic over $K_i$. 
\end{proposition}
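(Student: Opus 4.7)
The plan is to reduce the eventual periodicity to a nontrivial polynomial identity in $\beta_i$ over $K_i$ using equation \eqref{eq:1}, after cyclically shifting the base so that $\beta_i$ sits at the front. The shift ensures that the auxiliary sums appearing in \eqref{eq:1} are free of $\beta_i$, leaving $B[n]$ as the only place where $\beta_i$ enters.

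First I would set $y:=T^{i-1}(x)$. Iterating the defining recursion gives $T^{j}(x)=B[j]\,x-\sum_{\ell=1}^{j}a_\ell(x)\,B[\ell+1,j]$, so $y$ is a $K_i$-linear combination of $x$ and integers; hence $y\in K_i$. Consider then the cyclically shifted alternate base $B':=(\overline{\beta_i,\beta_{i+1},\dots,\beta_n,\beta_1,\dots,\beta_{i-1}})$ of period $n$. The expansion $d(B';y)$ (or its $d^*$ analog) is exactly the $(i-1)$-fold shift of $d(B;x)$, and so inherits eventual periodicity. Since $B'[n]=\beta_1\cdots\beta_n=B[n]$, applying \eqref{eq:1} to $y$ with base $B'$ and with pre-period/period lengths $k,m$ chosen divisible by $n$ yields
\[y\bigl((B[n])^{m+k}-(B[n])^k\bigr)=\sum_{j=1}^{m+k}y'_j(B[n])^{m+k-j}-\sum_{j=1}^{k}y'_j(B[n])^{k-j},\]
with $y'_j=\sum_{\ell=0}^{n-1}a_{i-1+nj-\ell}(x)\,B'[n+1-\ell,n]$. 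For each $\ell\in\{0,\dots,n-1\}$ the factor $B'[n+1-\ell,n]$ is the product of the last $\ell$ entries of $B'$, namely $\beta_{i-\ell},\beta_{i-\ell+1},\dots,\beta_{i-1}$ (indices taken mod $n$); these never include $\beta_i=\beta'_1$, so every $y'_j$ lies in $K_i$.

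I would then view the displayed identity as a polynomial relation in $\beta_i$ over $K_i$. Because $B[n]=(\beta_1\cdots\beta_{i-1}\beta_{i+1}\cdots\beta_n)\,\beta_i$ is linear in $\beta_i$ with nonzero coefficient (every $\beta_j$ is nonzero since $|B[j]|\to\infty$), the left-hand side has $\beta_i$-degree exactly $m+k$ with leading coefficient $y\cdot(\beta_1\cdots\beta_{i-1}\beta_{i+1}\cdots\beta_n)^{m+k}$, while the right-hand side has $\beta_i$-degree at most $m+k-1$. The hypothesis $y=T^{i-1}(x)\neq 0$ makes this leading coefficient nonzero, producing a nontrivial polynomial over $K_i$ annihilating $\beta_i$. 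The main delicate point is the bookkeeping verifying $y,y'_j\in K_i$, which rests entirely on the cyclic structure of $B'$; after that the argument reduces to a direct degree count.
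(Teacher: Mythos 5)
Your proof is correct and follows essentially the same route as the paper: both derive the polynomial relation from Eq.~\eqref{eq:1} and show it is nontrivial by a leading-coefficient computation that reduces to the hypothesis $T^{i-1}(x)\neq 0$. The only difference is organizational --- you cyclically shift the base so that $\beta_i$ leads and every $y'_j$ lies in $K_i$, putting the top-degree term on the left-hand side, whereas the paper works with $B$ directly and extracts the $\beta_i$-content of $y_1$ on the right-hand side; your formulation has the minor advantage of making the substitution $\beta_i\to X$ unambiguous, so the paper's separate case $\beta_i=\beta_j$ is not needed.
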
\label{algebraicconstruction}

\begin{proof}
Note that if $\beta_i=\beta_j$ for some $j\neq i$, then $\beta_i\in K_i$ and so, $\beta_i$ is trivially algebraic over $K_i$.
Now, suppose $\beta_i \neq \beta_j$ for any $i\neq j$. Apply the mapping $\beta_i\rightarrow X$ to Eq. \eqref{eq:1}. It suffices to show that the resulting polynomial in $X$ is not constantly zero. The leading coefficient of the LHS is 
\[\frac{xB[n]^{m+k}}{(\beta_i)^{m+k}},\]
while
the leading coefficient of the RHS is
\[ \dfrac{(a_1B[2,n]+\cdots+a_{i-1}B[i,n])B[n]^{m+k-1}}{(\beta_i)^{m+k}} = \left(x-\dfrac{T^{i-1}(x)}{B[i-1]}\right)\frac{B[n]^{m+k}}{(\beta_i)^{m+k}}.\]
These two coefficients are not equal if and only if 
\[x\neq x-\dfrac{T^{i-1}(x)}{B[i-1]}. \qedhere\]

\end{proof}

\begin{remark}
In particular, $\beta_1$ is algebraic over $\Q(\beta_2,\dots,\beta_n)$ if $d(B;\gamma)$ is eventually periodic and $\gamma \neq 0$; or if
one of $d(B;\gamma+1)$ and $d^*(B;\gamma+1)$ is eventually periodic and $\gamma+1\neq 0$.
\end{remark}

We now begin our exposition on the main results of this paper: Theorem \ref{bounds} and Corollary \ref{boundforlambda}. The discussion here follows the proof presented by Solomyak in \cite{Solomyak1994}.
The key difference of this paper from \cite{Solomyak1994} is analysis of the irreducibility of certain polynomials over a number field $\mathbb{Q}(\beta)$ used in the proof of Theorem \ref{equality}.

Suppose \(x \in [\gamma, \gamma+1]\) such that $y:=T^{i-1}(x)\neq0$.  Suppose that the $B$-expansion of $x$ is eventually periodic.
Then, the $B_i$-expansion of $y$ is eventually periodic where $B_i=(\overline{\beta_i,\dots,\beta_{n},\beta_1,\dots,\beta_{i-1}})=:(\overline{\beta'_1,\dots,\beta'_n})$. 

Suppose the $B_i$-expansion of $y$ is
\[(a_1,a_2,\dots,a_{nm},\overline{a_{nm+1},\dots,a_{nm+qn}}).\]
Here, we adapt the notation $y_j$ with the underlying base $B_i$.
We also use $B'[j,k]:=\prod_{m=j}^k\beta_m'$ with $B'[j]:=B'[1,j]$.

Suppose
$y=\sum_{j=1}^{\infty}\frac{y_j}{(B'[n])^j}$. For brevity, we write
$T^j$ instead of $T^j(y)$.
Note that $a_{nj-k}=\beta_{nj-k}'T^{nj-k-1}-T^{nj-k}$.
Then
\begin{align*}
y_j&=\sum_{k=0}^{n-1}a_{nj-k}B'[n+1-k,n]\\
 &=\sum_{k=0}^{n-1}(\beta_{nj-k}'T^{nj-k-1}-T^{nj-k})B'[n+1-k,n]\\
 &=T^{nj-n}B'[n]-T^{nj}.
\end{align*}

Recall that $\beta_i$ is algebraic over $K_i:=\Q(\beta_1,\dots,\beta_{i-1},\beta_{i+1},\dots,\beta_n$). Let $\lambda$ be a nontrivial conjugate 
of $\beta_i$ over $K_i$. Suppose that $B'[2,n]$ is fixed by the conjugate map $K_i(\beta_i) \longrightarrow K_i(\lambda)$ given by $\beta_i\to \lambda$.
Applying the map $\beta_i \rightarrow \lambda$, we get

\begin{align*}
y&=\sum_{j=1}^{\infty}\frac{y_j}{(\lambda B'[2,n])^j}\\
& =\sum_{j=1}^{\infty} \frac{T^{nj-n}B'[n]}{ ( \lambda B'[2,n])^j }-\sum_{j=1}^{\infty}\frac{T^{nj}}{(\lambda B'[2,n])^j}\\
& = \frac{\beta_i}{\lambda}\sum_{j=1}^{\infty}\frac{T^{nj}}{(\lambda B'[2,n])^j}-\sum_{j=1}^{\infty}\frac{T^{nj}}{(\lambda B'[2,n])^j}
+\frac{y \beta_i}{\lambda}.
\end{align*}

Therefore,
\[\left( \frac{\beta_i}{ \lambda}-1\right) \left( y+ \sum_{j=1}^{\infty}\frac{T^{nj}}{(\lambda B'[2,n])^j} \right)=0.\]
Since $\lambda \neq \beta_i$, then 
\begin{equation}
y+ \sum_{j=1}^{\infty}\frac{T^{nj}}{(\lambda B'[2,n])^j} = 0. \label{eq:2}
\end{equation}
In other words,
$(\lambda B'[2,n])^{-1}$ satisfies an analytic function
\[f(z)=y+\sum_{j=1}^\infty d_jz^j,\]
where $d_j\in[\gamma,\gamma+1]$.
Let $M_i=B'[2,n]=B[n]/\beta_i$. 
Let $z=( \lambda M_i)^{-1}$ and suppose $|z|< 1$. Then

\begin{eqnarray*}
    0 & = & y+\sum_{j=1}^\infty\left(\gamma+\dfrac{1}{2}\right)z^j+\sum_{j=1}^\infty \left(d_j-\gamma-\dfrac{1}{2}\right)z^j\\
    & = &\dfrac{2y-(2y-2\gamma-1)z}{2(1-z)}+\sum_{j=1}^\infty \left(d_j-\gamma-\dfrac{1}{2}\right)z^j.
\end{eqnarray*}
Hence, 
\[\left|\dfrac{2y-(2y-2\gamma-1)z}{2(1-z)}\right|=\left|\sum_{j=1}^\infty \left(d_j-\gamma-\dfrac{1}{2}\right)z^j\right|\leq \sum_{j=1}^\infty \dfrac{|z|^j}{2}=\dfrac{|z|}{2(1-|z|)},\]
since $|d_j-\gamma-1/2|\le 1/2$.
Thus, \(z\) is inside the solution space of
\[\left|\dfrac{2y-(2y-2\gamma-1)Z}{1-Z}\right|\leq\dfrac{|Z|}{1-|Z|}.\]

For \(R\in \mathbb{R}^+\), we consider the following circles, which are both symmetric with respect to the real axis:
\[\mathscr{A}_R:=\left\lbrace Z \in \mathbb{C} : R= \left|\dfrac{2y-(2y-2\gamma-1)Z}{1-Z}\right|\right\rbrace\]
and 
$$\mathscr{C}_R:=\left\lbrace Z \in \mathbb{C} :  R=\dfrac{|Z|}{1-|Z|}\right\rbrace. $$ 
The circles $\mathscr{A}_R$ and $\mathscr{C}_R$ intersect at at most 2 distinct points. Figure \ref{fig:arbetaplot} illustrates the plots of $\mathscr{A}_R$ and $\mathscr{C}_R$ when $\gamma=0$ and $ y=1$ for various values of $R$.

\begin{figure}[htbp] 
     \centering
     \begin{subfigure}[b]{0.25\textwidth}
         \centering
         \includegraphics[width=\textwidth]{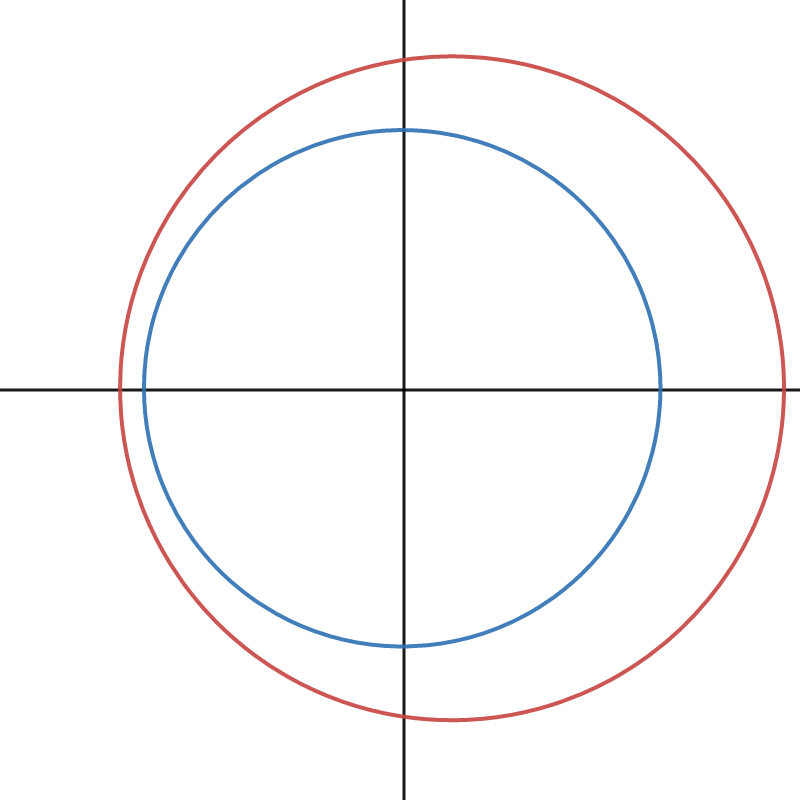}
         \caption{$R=1.775$}
     \end{subfigure}
     \hfill
     \begin{subfigure}[b]{0.25\textwidth}
         \centering
         \includegraphics[width=\textwidth]{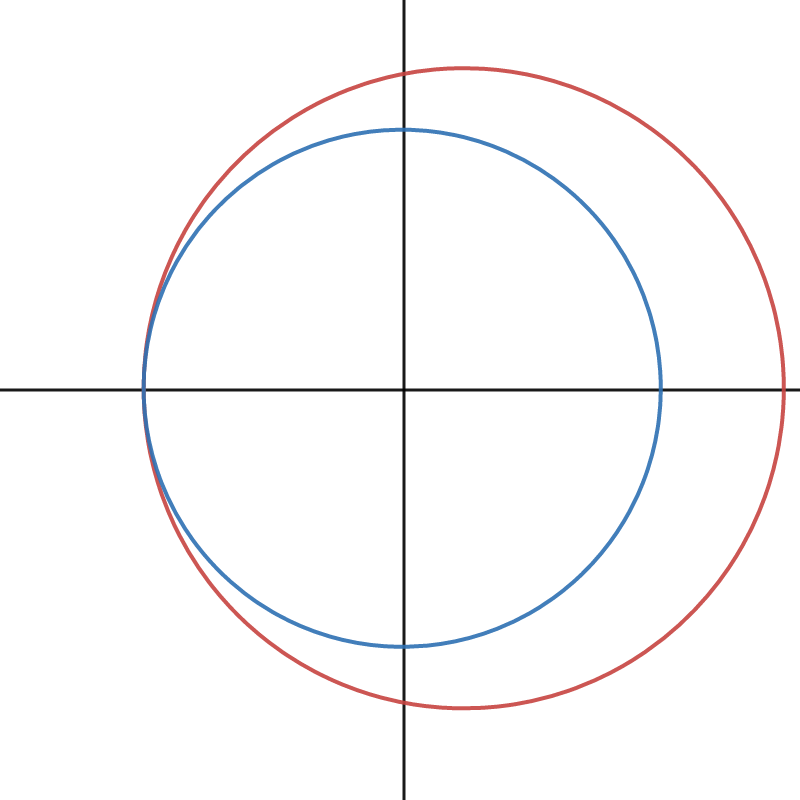}
         \caption{$R=\tfrac{3+\sqrt{17}}{4}$}
     \end{subfigure}
     \hfill
     \begin{subfigure}[b]{0.25\textwidth}
         \centering
         \includegraphics[width=\textwidth]{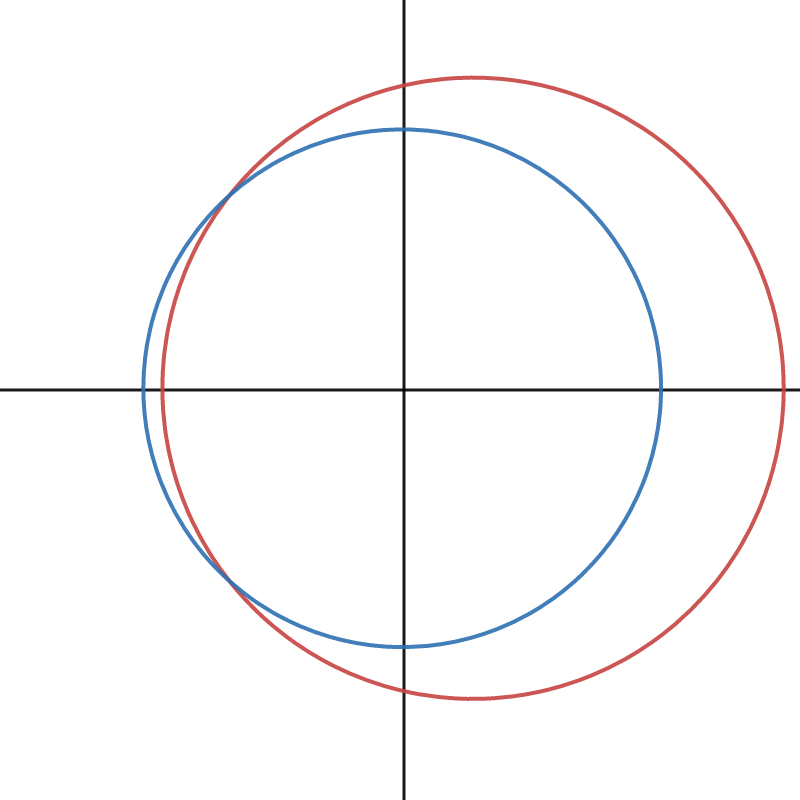}
         \caption{$R=1.8$}
     \end{subfigure}
         \caption{Plot of $\mathscr{A}_R$ (red) and $\mathscr{C}_R$ (blue) for each given $R$}
        \label{fig:arbetaplot}
\end{figure}

So, $|z|\geq|z(\gamma;y)|$ where $z(\gamma;y)$ is the real solution to 
\begin{equation} \label{equationforbound}
\dfrac{|Z|}{1-|Z|}=\left|\dfrac{2y-(2y-2\gamma-1)Z}{1-Z}\right| 
\end{equation}
with minimal absolute value.

We now state the following theorem. 
\begin{theorem}\label{bounds}
Let $x\in[\gamma,\gamma+1]$ and $\beta_1,\dots,\beta_n\in\R$ such that $\lim_{m\to\infty}|B[m]|=\infty$ where $B=(\overline{\beta_1,\dots,\beta_n})$. 
For $i\in\{1,\dots,n\}$ such that $\prod_{j=1,j\neq i}^n\beta_i$ is fixed by the map $\beta_i\to \lambda$ where $\lambda$ is a nontrivial conjugate of $\beta_i$ over $\Q(\beta_{i+1},\dots,\beta_n,\beta_1,\dots,\beta_{i-1})$, if $y := T^{i-1}(x) \neq 0$,   then we have the following.
\begin{enumerate}

\item [1.] If \(y\neq \gamma, \gamma+1\), then 
 $$|z(\gamma;y)|=\begin{cases} 
 \dfrac{\gamma+1}{2(\gamma-y)} + \dfrac{1}{2} \sqrt{\dfrac{(\gamma+1)^2+4y(y-\gamma)}{(\gamma-y)^2}},&\text{ if }\gamma\geq-\tfrac{1}{2}\text{ and }y\geq0\\
\frac{y}{y-\gamma},&\text{ if }-1\leq\gamma<-\tfrac{1}{2}\text{ and }y\geq0\\
\frac{y}{y-\gamma-1},&\text{ if }-\tfrac{1}{2}\leq\gamma<y<0\\ 
\frac{\gamma}{2(\gamma+1-y)}+\frac{1}{2}\sqrt{\frac{\gamma^2-4y(\gamma+1-y)}{(\gamma+1-y)^2}},&\text{ if } \gamma\leq -\tfrac{1}{2}\text{ and } y\leq 0.\\ 
\end{cases}$$

\item [2.] If $y=\gamma$, then
$$|z(\gamma;y)|=
    \begin{cases}
    \frac{y}{y+1} & \text{ if } y > 0 \\
     - y  &\text{ if }  0 > y \ge -1/2 \\
     \frac{y+\sqrt{y^2-4y}}{2} &\text{ if } -1/2 \ge y.
     \end{cases}$$
\item [3.] If $y=\gamma+1$, 
then $$|z(\gamma;y)|=
    \begin{cases}
    - \dfrac{y -\sqrt{y^2+4y}}{2} & \text{ if } y \geq 1/2\\
    y & \text{ if } 0<y \leq 1/2\\
    \dfrac{y}{y-1} &\text{ if } y <0.
    \end{cases}$$ 
\end{enumerate}   
\end{theorem}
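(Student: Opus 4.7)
The plan is to compute $z(\gamma;y)$ by restricting Equation \eqref{equationforbound} to the real axis, solving the resulting algebraic equations in each sign region, and selecting the root with minimal modulus whose sign is consistent with the parameter range.

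First, for real $Z = t$ with $|t| < 1$, the equation splits into four branches according to the signs of $t$ and of the numerator $2y - (2y-2\gamma-1)t$ on the right. When $t > 0$, the factors $1-|t|$ and $|1-t|$ coincide and cancel, leaving a linear equation whose two sign choices give
\[
t = \frac{y}{y-\gamma} \qquad \text{and} \qquad t = \frac{y}{y-\gamma-1}.
\]
When $t < 0$, the denominators differ and the equation is genuinely quadratic; the two sign choices produce
\[
(y-\gamma)t^2 - (\gamma+1)t - y = 0 \qquad \text{and} \qquad (\gamma+1-y)t^2 + \gamma t + y = 0,
\]
whose roots are available explicitly via the quadratic formula.

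Next, for each of the four subregions of Part~1 together with the boundary cases of Parts~2 and~3, I would (i) identify which of the six candidate roots are simultaneously consistent with the sign hypothesis of their branch and lie in $(-1, 1)$; (ii) compare the absolute values of the surviving candidates; and (iii) rewrite the minimum modulus in the form displayed in the statement. For instance, in subcase~1 ($\gamma \geq -\tfrac12$, $y \geq 0$) the minimum is attained at the negative root
\[
t_{-} = \frac{(\gamma+1) - \sqrt{(\gamma+1)^2 + 4y(y-\gamma)}}{2(y-\gamma)},
\]
and $|t_{-}|$ simplifies exactly to the displayed expression after pulling $(y-\gamma)$ inside the radical; in subcase~2 the minimum is instead the linear root $y/(y-\gamma)$; the remaining two subcases of Part~1 arise analogously from the second quadratic. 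Parts~2 and~3 are easier because $2y - (2y-2\gamma-1)t$ collapses to $2\gamma + t$ when $y=\gamma$ and to $2(\gamma+1) - t$ when $y = \gamma+1$, reducing one of the two quadratics to a linear equation with elementary roots.

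The main obstacle is the bookkeeping in the case analysis. Within a given parameter subregion there can be several valid candidates with competing moduli, so one must systematically discard the larger ones and verify that the relevant discriminants are non-negative on the claimed region. A secondary source of care is that the sign of $\gamma - y$ or of $\gamma+1-y$ must be tracked when it is pulled under the square root, since the statement of the theorem writes $(\gamma-y)^2$ or $(\gamma+1-y)^2$ in the radicand. The underlying manipulations are elementary but voluminous, and the ultimate shape of the formula in each region is dictated entirely by which algebraic branch wins the modulus comparison.
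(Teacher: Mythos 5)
Your proposal follows essentially the same route as the paper's proof: split Equation \eqref{equationforbound} into four real branches according to the signs of $z$ and of $2y-(2y-2\gamma-1)z$, obtain the two linear roots $y/(y-\gamma)$, $y/(y-\gamma-1)$ and the two quadratics (your $t_-$ is the paper's $V_+$), and then select the admissible root of minimal modulus region by region. The strategy and the candidate roots agree with the paper, which likewise carries out only the $y\neq\gamma,\gamma+1$ case in detail and leaves the modulus comparisons as elementary bookkeeping.
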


\begin{proof}
    We only show the case when $y \neq \gamma, \gamma+1$. We find a real solution $z$ with $0<|z|<1$ for Equation (\ref{equationforbound}). Consider the following cases:

    \begin{itemize}
        \item[(1)] Suppose $0<z<1$ and $2y-(2y-2\gamma-1)z>0$. Then $z=\frac{y}{y-\gamma}$. Since $y-\gamma>0$, then $y>0$. Also, $y<y-\gamma$ and thus, $\gamma<0$. We see that $\frac{y}{y-\gamma}$ is a real solution if and only if $y>0$ and $\gamma<0$.

        \item[(2)] Suppose $0<z<1$ and $2y-(2y-2\gamma-1)z<0$. Using similar computations as in (1), we have that $\frac{y}{y-\gamma-1}$ is a real solution if and only if $y<0$ and $\gamma>-1$.

        \item[(3)] Suppose $-1<z<0$ and $2y-(2y-2\gamma-1)z>0$. Equation (\ref{equationforbound}) yields
        $$(\gamma - y)z^2+(\gamma+1)z+y=0.$$ Then $$z=V_\pm=\frac{-\gamma-1\pm\sqrt{(\gamma+1)^2+4y(y-\gamma)}}{2(\gamma-y)}.$$ 
        Note that $V_\pm$ is real if and only if $4y^2-4\gamma y+(\gamma+1)^2 \geq 0$. 
        The discriminant of  $4X^2-4\gamma X +(\gamma+1)^2$ is $16(-2\gamma-1)$. So, $V_\pm$ is real if $\gamma \geq -\tfrac{1}{2}$. On the other hand, if $\gamma < -\tfrac{1}{2}$, then $V_\pm$ is real if  either $y\leq (\gamma - \sqrt{-2\gamma -1})/2$ or $y\geq (\gamma + \sqrt{-2\gamma -1})/2$. 

        \item[(4)] Suppose $-1<z<0$ and $(2y-2\gamma-1)z-2y>0$. Equation (\ref{equationforbound}) yields $$(\gamma+1-y)z^2+\gamma z+y=0.$$ 
        So, $$z=W_{\pm}:= \frac{-\gamma\pm \sqrt{\gamma^2-4y(\gamma+1-y)}}{2(\gamma+1-y)}.$$
        Note that $W_{\pm}$ is real if and only if either
        \begin{enumerate}
            \item[i.] $\gamma\leq-\tfrac{1}{2}$; or 
            \item[ii.] $\gamma>-\tfrac{1}{2}$, and $y\le (\gamma+1 - \sqrt{2\gamma+1})/2$ or $y\ge (\gamma+1 + \sqrt{2\gamma+1})/2$.
        \end{enumerate}
    \end{itemize}

    The result follows from the comparison of the absolute values of the obtained real solutions. 

    Suppose $\gamma\geq0$ and $y>0$. We need to consider Cases 3 and 4. Note that $V_+<0$ while $V_->0$. Then $V_-$ is not a solution. 
    We can show $2y-(2y-2\gamma-1)V_+\geq 0$. Meanwhile, $2y-(2y-2\gamma-1)W_\pm\geq 0$. Thus, $W_\pm$ is not a solution while $V_+$ is. So, the only possible real solution is $V_+$.

    Next, suppose $-1\leq \gamma\leq 0$ and $y\geq 0$. We need to consider Cases 1, 3 and 4. Note that $V_-,W_\pm\geq 0$ and so, $z$ cannot be $V_-$ or $W_\pm$. Meanwhile, $V_+\leq 0$ and $2y-(2y-2\gamma-1)V_+\geq0$. So, the possible real solutions are $V_+$ and $\frac{y}{y-\gamma}$ (from Case 1). Observe that $|V_+|\leq \frac{y}{y-\gamma}$ if and only if $\gamma\geq -\frac{1}{2}$.

    Now, suppose $-1\leq \gamma\leq 0$ and $y<0$. The relevant cases are Cases 2, 3 and 4. Now, $V_\pm,W_+\geq0$. Hence, $V_\pm$ and $W_+$ are not solutions. Moreover,  $W_-\leq 0$ and $2y-(2y-2\gamma-1)W_-\leq 0$. So, the possible real solutions are $W_-$ and $\frac{y}{y-\gamma-1}$. Observe that $|W_-|\geq \left|\frac{y}{y-\gamma-1}\right|$ if and only if $\gamma\geq -\frac{1}{2}$.

    Finally, suppose $-1>\gamma$. Note that $y<\gamma+1<0$. So, we only consider Cases 3 and 4. Now,  $2y-(2y-2\gamma-1)V_\pm\leq 0$. Hence, $V_\pm$ is not a solution. Also, $W_+>0$ and so, $W_+$ is not a solution. So, the only possible solution is $W_-$. 
    \end{proof}

As a consequence, we have the following.
\begin{corollary}[Main Result]  \label{boundforlambda} Let $x\in[\gamma,\gamma+1]$ and $\beta_1,\dots,\beta_n\in\R$ such that $\lim_{m\to\infty}|B[m]|=\infty$ where $B=(\overline{\beta_1,\dots,\beta_n})$. Fix $i\in\{1,\dots,n\}$ and let $y = T^{i-1}(x)$. Suppose $y\neq0$. Let $\lambda$ be a nontrivial conjugate of $\beta_i$ over $\Q(\beta_1,\dots,\beta_{i-1},\beta_{i+1},\dots,\beta_n$).
Suppose that $M_i =B[n]/\beta_i$ is fixed by the conjugate map $\beta_i \mapsto \lambda$.
Then, $$|\lambda|\leq \dfrac{1}{|M_iz(\gamma;y)|}.$$ 
 In particular, if $\gamma=0$ and $y=1$, then $|z(0;1)| =  1/\varphi$ where $\varphi =(1+\sqrt{5})/2$ and so,
\[|\lambda| \le \frac{\varphi}{|M_i|}.\]
\end{corollary}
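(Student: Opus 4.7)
The plan is to read Corollary \ref{boundforlambda} as a direct packaging of Theorem \ref{bounds} together with the analytic setup developed in the paragraphs leading up to it. With $z := (\lambda M_i)^{-1}$, the relation $|\lambda| \le 1/|M_i\, z(\gamma;y)|$ is merely a rearrangement of $|z|\ge |z(\gamma;y)|$, so the entire task is to justify that inequality and then plug in for the special case.

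First I would dispose of the trivial sub-case $|z|\ge 1$. There $|\lambda M_i|\le 1$, hence $|\lambda|\le 1/|M_i|$. A quick inspection of each branch of the formula for $|z(\gamma;y)|$ in Theorem \ref{bounds} shows $|z(\gamma;y)|\le 1$ (indeed the derivation required $0<|z|<1$ in the first place), so $1/|M_i|\le 1/|M_i\, z(\gamma;y)|$ and the bound holds.

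Next I would handle the main case $|z|<1$. This is exactly the regime treated in the discussion preceding Theorem \ref{bounds}: applying the conjugation $\beta_i\mapsto\lambda$, which by hypothesis fixes $M_i=B[n]/\beta_i = B'[2,n]$, converts the expansion of $y$ into the identity \eqref{eq:2}, and the subsequent estimate forces $z$ into the closed region
\[
\left|\dfrac{2y-(2y-2\gamma-1)Z}{1-Z}\right|\le\dfrac{|Z|}{1-|Z|}.
\]
Since $z(\gamma;y)$ is, by definition, the real solution of \eqref{equationforbound} of smallest absolute value, and since $\mathscr{A}_R$ and $\mathscr{C}_R$ meet in at most two points (symmetric about the real axis) so that the boundary is crossed only at $|Z|=|z(\gamma;y)|$, any $z$ in the region must satisfy $|z|\ge |z(\gamma;y)|$. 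Substituting $z=(\lambda M_i)^{-1}$ and rearranging yields $|\lambda|\le 1/|M_i\, z(\gamma;y)|$.

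Finally, for the specialisation $\gamma=0$, $y=1$, I would note that $y=\gamma+1$ with $y\ge 1/2$, so part 3 of Theorem \ref{bounds} applies and gives
\[
|z(0;1)| \;=\; -\dfrac{1-\sqrt{1+4}}{2} \;=\; \dfrac{\sqrt{5}-1}{2} \;=\; \dfrac{1}{\varphi},
\]
whence $|\lambda|\le \varphi/|M_i|$. There is no real obstacle here; the only potentially awkward point is remembering to treat the case $|z|\ge 1$ separately before invoking the analytic estimate (which is derived only under $|z|<1$). Everything else is bookkeeping.
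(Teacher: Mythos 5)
Your proposal is correct and follows essentially the same route as the paper, which presents the corollary as an immediate consequence of the inequality $|z|\geq|z(\gamma;y)|$ derived in the discussion preceding Theorem \ref{bounds} together with the evaluation $|z(0;1)|=1/\varphi$ from part 3 of that theorem. Your separate treatment of the case $|z|\geq 1$ (where the bound holds trivially because $|z(\gamma;y)|\leq 1$) is a small but welcome tightening of a step the paper glosses over by simply supposing $|z|<1$.
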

\begin{remark}
We can show that the bound $\varphi/|M_i|$ is sharp by considering the series $f(z)=1+\sum_{j=1}^\infty z^{2j-1}$ (which is the same function used in \cite[Corollary 2.3]{Solomyak1994}) and using Theorem \ref{equality} provided in the next section.
\end{remark}

\section{The $(\alpha,\beta)$-expansion}

Suppose $B=(\beta_1,\dots,\beta_m,\overline{\beta_{m+1},\dots,\beta_{m+n}})$. Each sequence $(d_1,d_2,\dots) \in \mathcal{A}(B)$ that represents a $B$-expansion $d(B;x)$ for some $x\in [0,1)$ is said to be admissible and, under certain conditions, can be described via the expansions $d(\sigma^k(B);\gamma)$ and $d^*(\sigma^k(B);\gamma+1)$ where 
$\sigma^k(B):=(\beta_{k+1},\beta_{k+2},\dots)$ for $k\in \mathbb{N}$ \cite{Dem2020, charlier2021expansions}.

Observe that $(d_{m+1},d_{m+2},\dots)$ is $(\overline{\beta_{m+1},\dots,\beta_{m+n}})$-admissible if $(d_1,d_2,\dots)$ is $B$-admissible. 
We have following the generalization of Parry numbers.

\begin{definition} \label{parrytuple}
A real base $B=(\beta_1,\beta_2,\dots)$ is a \textbf{Parry Sequence} if both $d(B_i;\gamma)$ and $d^*(B_i,\gamma+1)$ are (eventually) periodic for all $i\in\N$ where $B_i=(\beta_i,\beta_{i+1},\dots)$.
\end{definition}

 Let $B=(\overline{\alpha,\beta})$ where $\alpha, \beta \in \mathbb{R}$ such that $|\alpha\beta|>1$. To proceed, we first give a restatement of Proposition 3.4 found in \cite{Dem2020} for $\gamma=0$.

  \begin{proposition}\label{5cases}
            Let $\alpha,\beta>0$ such that $\alpha\beta>1$. 
            Let $d(\alpha,\beta;1)=(c_1,c_2,\dots)$ 
            and $d(\beta, \alpha;1) =(q_1,q_2,\dots)$.
            \begin{enumerate}
                \item[1.] [\textbf{Type 1}] If $d(\alpha, \beta;1)$ is not finite (i.e., it does not end in an infinite string of zeroes), then $d^*(\alpha, \beta;1)=d(\alpha, \beta;1)$. 
                
                \item[2.] Suppose $d(\alpha, \beta;1) = (c_1, \dots, c_m, \overline{0})$ is finite with $c_m \neq 0$. 
                    \begin{enumerate}
                        \item[a.] [\textbf{Type 2}] If $m$ is even, then
                        \[d^*(\alpha, \beta;1)=(\overline{c_1,c_2,\dots,c_{m-1},c_m-1}).\]
                        
                        \item[b.] Suppose $m$ is odd.
                        \begin{enumerate}
                            \item[i.] Suppose  $d(\beta, \alpha; 1)=(q_1, \dots, q_{m'}, \overline{0})$ is finite with $q_{m'}\neq 0$. 
                            \begin{itemize}
                                \item[] [\textbf{Type 3}] If $m'$ is even, then 
                                \[d^*(\alpha, \beta;1)=(c_1,c_2,\dots,c_{m-1},c_m-1,\overline{q_1,q_2,\dots,q_{m'-1},q_{m'}-1}).\] 
                                
                                \item[] [\textbf{Type 4}] If $m'$ is odd, then 
                                \[d^*(\alpha, \beta;1)=(\overline{c_1,c_2,\dots,c_{m-1},c_m-1,q_1,q_2,\dots,q_{m'-1},q_{m'}-1}).\]
                        \item[ii.] [\textbf{Type 5}] If $d( \beta, \alpha;1)$ is not finite, then 
                        \[d^*(\alpha, \beta;1)=(c_1,c_2,\dots,c_{m-1},c_m-1,q_1,q_2,\dots).\] 
                            
                                \end{itemize}
                               
                        \end{enumerate}
                    \end{enumerate}
            \end{enumerate}
        \end{proposition}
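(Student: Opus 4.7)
The plan is to rely on the characterisation of the quasi-greedy expansion as the digit-wise left limit $d^*(\alpha,\beta;1) = \lim_{\varepsilon \to 0^+} d(\alpha,\beta;1-\varepsilon)$, equivalently the lexicographically largest element of $\A(\overline{\alpha,\beta})$ representing $1$ which does not terminate with an infinite tail of zeros. The central bookkeeping observation is that the tail of $d^*(\alpha,\beta;1)$ past position $k$ is itself a quasi-greedy expansion of $1$ in the shifted base $\sigma^k(\overline{\alpha,\beta})$, which equals $(\overline{\alpha,\beta})$ or $(\overline{\beta,\alpha})$ according to the parity of $k$.

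First I would dispose of Type~1: if $d(\alpha,\beta;1)$ is not finite, then no orbit iterate $T^k(1)$ equals $0$, so for every $N$ there is an $\varepsilon_N > 0$ such that $d(\alpha,\beta;1-\varepsilon)$ agrees with $d(\alpha,\beta;1)$ in the first $N$ positions whenever $0 < \varepsilon < \varepsilon_N$; letting $\varepsilon \to 0^+$ gives $d^* = d$. Next, assume $d(\alpha,\beta;1) = (c_1,\dots,c_m,\overline{0})$ with $c_m \neq 0$. Since $d^* \neq d$ but $d^*$ must still sum to $1$ and be lex-maximal, it agrees with $d$ in positions $1,\dots,m-1$ and has $m$-th digit exactly $c_m - 1$; writing
\[
1 \;=\; \sum_{j=1}^{m-1}\frac{c_j}{B[j]} + \frac{c_m-1}{B[m]} + \frac{1}{B[m]}\cdot \eta
\]
forces $\eta = 1$, so the digits from position $m+1$ onward form a quasi-greedy expansion of $1$ in the shifted base $\sigma^m(\overline{\alpha,\beta})$.

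Now the parity of $m$ controls the recursion. If $m$ is even the shifted base is again $(\overline{\alpha,\beta})$ and the tail must equal $d^*(\alpha,\beta;1)$, producing the self-referential periodic formula of Type~2. If $m$ is odd the tail equals $d^*(\beta,\alpha;1)$, and I would finish by re-applying the classification to $(\beta,\alpha)$: if $d(\beta,\alpha;1)$ is infinite the Type~1 analysis gives Type~5; if $d(\beta,\alpha;1) = (q_1,\dots,q_{m'},\overline{0})$ with $m'$ even the Type~2 analysis gives Type~3; if $m'$ is also odd, the tail of $d^*(\beta,\alpha;1)$ is in turn $d^*(\alpha,\beta;1)$, so the system
\[
d^*(\alpha,\beta;1) = (c_1,\dots,c_m-1,\,d^*(\beta,\alpha;1)),\qquad d^*(\beta,\alpha;1) = (q_1,\dots,q_{m'}-1,\,d^*(\alpha,\beta;1))
\]
collapses to a single period of length $m+m'$, giving Type~4. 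A routine geometric-series computation confirms in each case that the proposed expansion sums to $1$.

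The main obstacle I anticipate is the admissibility verification: one must check that every candidate sequence actually belongs to $\A(\overline{\alpha,\beta})$, i.e.\ satisfies the greedy admissibility inequalities at every shift. This reduces, via the lex-comparison criterion for alternate bases from \cite{Dem2020,charlier2021expansions}, to showing that the modification $c_m \mapsto c_m-1$ followed by the recursive continuation preserves those inequalities; the underlying reason is that greediness of $d(\alpha,\beta;1)$ already encodes them, and subtracting one from the last nonzero digit can only weaken, not violate, the required shift-wise comparisons.
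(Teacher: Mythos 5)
The paper offers no proof of this proposition: it is presented verbatim as ``a restatement of Proposition 3.4 found in \cite{Dem2020} for $\gamma=0$'', so there is nothing internal to compare your argument against. That said, your outline is a correct reconstruction of the standard argument (essentially the one in the cited source, which itself extends Parry's classical computation of $d^*(\beta;1)$ from $d(\beta;1)$). The two load-bearing facts you identify are the right ones: (i) $d(\alpha,\beta;1-\varepsilon)$ stabilises digit-by-digit as $\varepsilon\to 0^+$ precisely up to the first index $m$ with $T^m(1)=0$, at which point the $m$-th digit drops to $c_m-1$ and $T^m(1-\varepsilon)\to 1^-$, so the tail from position $m+1$ is $d^*(\sigma^m(B);1)$; and (ii) the parity of $m$ decides whether $\sigma^m(B)$ is $(\overline{\alpha,\beta})$ or $(\overline{\beta,\alpha})$, after which the five types fall out of the self-referential system exactly as you write them.

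One soft spot: your closing paragraph treats admissibility as the main remaining obstacle and waves it away with ``subtracting one from the last nonzero digit can only weaken the inequalities.'' As stated this is both unnecessary and potentially circular. It is unnecessary because the paper \emph{defines} $d^*(B;\gamma+1)$ as the left limit $\lim_{\varepsilon\to 0^+}d(B;\gamma+1-\varepsilon)$, so once you have computed that limit via the orbit argument you are done; no separate membership check in $\mathcal{A}(\overline{\alpha,\beta})$ is required. It is potentially circular because the shift-wise admissibility inequalities for alternate bases are stated in terms of $d^*(\sigma^k(B);1)$ itself, which is exactly the object being determined. I would recommend committing entirely to the left-limit computation and, in exchange, spelling out the continuity claim you use in Type 1: the digit maps $x\mapsto a_k(x)$ are left-continuous at $x=1$ at every level $k$ exactly when $T^k(1)\neq 0$ for all $k$ (equivalently, the expansion is not finite), which is what makes $d^*=d$ there and forces the $c_m\mapsto c_m-1$ correction in the finite case.
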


 \begin{remark}
  For an ordered pair $(\alpha, \beta)$ of Type 1, 2, 3 or 4, it follows that
     $d(\alpha, \beta; 1)$ is eventually periodic if and only if $d^*(\alpha, \beta; 1)$ is eventually periodic. Now, suppose $(\alpha, \beta)$ is of Type 5.
     Then $d^*(\alpha, \beta; 1)$ is eventually periodic if and only if $d(\beta, \alpha;1)$ is eventually periodic. 
     Note that if  $(\alpha, \beta)$ is of Type 5, then $(\beta,\alpha)$ is of Type 1. By Proposition \ref{5cases} (1), we have $d^*(\alpha, \beta; 1)$ is eventually periodic if and only if $d^*(\beta, \alpha; 1)$ is eventually periodic.

     \end{remark}

 \begin{proposition} \label{equiv} Let $\gamma=0$ and $\alpha,\beta>0$ such that $\alpha\beta>1$.
 The following are equivalent:
 \begin{enumerate}
     \item The expansions $d(\alpha,\beta;1)$ and $d(\beta,\alpha;1)$ are eventually periodic.
     \item The expansions $d^*(\alpha,\beta;1)$ and $d^*(\beta,\alpha;1)$ are eventually periodic.
 \end{enumerate}
 \end{proposition}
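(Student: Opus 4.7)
The plan is to prove Proposition \ref{equiv} by a case analysis on the five types introduced in Proposition \ref{5cases}, applied to both ordered pairs $(\alpha,\beta)$ and $(\beta,\alpha)$. Most of the work is already packaged in the remark immediately following Proposition \ref{5cases}; the only genuinely delicate situation is Type 5, where the $d^*$ expansion of one pair is intertwined with the $d$ expansion of the swapped pair.

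For the direction $(1) \Rightarrow (2)$, if $(\alpha,\beta)$ belongs to Types 1 through 4, then the remark directly gives that $d(\alpha,\beta;1)$ eventually periodic implies $d^*(\alpha,\beta;1)$ eventually periodic. If instead $(\alpha,\beta)$ is Type 5, then by definition $d(\alpha,\beta;1) = (c_1,\dots,c_m,\overline{0})$ with $m$ odd and $d(\beta,\alpha;1)$ is not finite, which forces $(\beta,\alpha)$ to be Type 1. Hypothesis (1) guarantees that $d(\beta,\alpha;1) = (q_1,q_2,\dots)$ is eventually periodic, so the expression
\[d^*(\alpha,\beta;1) = (c_1,\dots,c_{m-1},c_m-1,q_1,q_2,\dots)\]
is eventually periodic as well (appending a finite prefix to an eventually periodic sequence preserves the property), and $d^*(\beta,\alpha;1) = d(\beta,\alpha;1)$ is immediately eventually periodic because $(\beta,\alpha)$ is Type 1. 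Applying the same analysis with the roles of $\alpha$ and $\beta$ swapped completes the direction.

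For the direction $(2) \Rightarrow (1)$, I use the observation that in every type of Proposition \ref{5cases}, $d(\alpha,\beta;1)$ is either finite (Types 2, 3, 4, and 5) or equal to $d^*(\alpha,\beta;1)$ (Type 1). A finite expansion terminates in $\overline{0}$ and is automatically eventually periodic, independent of any hypothesis. In Type 1, the identity $d(\alpha,\beta;1) = d^*(\alpha,\beta;1)$ transfers eventual periodicity directly from (2). The same dichotomy applied to the ordered pair $(\beta,\alpha)$ then yields the eventual periodicity of $d(\beta,\alpha;1)$, completing this direction.

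The main obstacle is the Type 5 bookkeeping: one must recognize that Type 5 for $(\alpha,\beta)$ forces Type 1 for $(\beta,\alpha)$, so that the swapped expansions $d$ and $d^*$ coincide, and must verify that the prefix modification in the formula for $d^*(\alpha,\beta;1)$ does not affect eventual periodicity. Once these observations are in place, the proof is a short, organized enumeration over the five types.
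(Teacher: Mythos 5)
Your proposal is correct and follows essentially the same route as the paper: the $(2)\Rightarrow(1)$ direction rests on the dichotomy that $d(\alpha,\beta;1)$ is either finite or equal to $d^*(\alpha,\beta;1)$, and the $(1)\Rightarrow(2)$ direction is exactly the content of the remark following Proposition \ref{5cases}, including the key observation that Type 5 for $(\alpha,\beta)$ forces Type 1 for $(\beta,\alpha)$. You merely unpack that remark into an explicit enumeration over the five types, which the paper leaves implicit.
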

 
 \begin{proof}
 
      Note that $d(\alpha,\beta;\gamma+1)=d^*(\alpha,\beta;\gamma+1)$ if and only if $d(\alpha,\beta;\gamma+1)$ is not finite (i.e., $(\alpha, \beta)$ is of Type 1) by Proposition \ref{5cases}. So, $(2) \implies (1)$ holds since, if say $d(\alpha,\beta;\gamma+1)$ is not eventually periodic, then it must not be finite and consequently, $d(\alpha,\beta;\gamma+1)=d^*(\alpha,\beta;\gamma+1)$.
     
      The implication  $(1) \implies (2)$ readily follows from the remark following Proposition \ref{5cases}.
 \end{proof}
 
 \begin{remark}
 The above proposition, in fact, holds even when one of $\alpha$ and $\beta$ is negative (and $|\alpha \beta|>1$). 
     Similar results on the determination of the form of $d^*(\alpha, \beta;1)$ based on $d(\alpha, \beta; 1)$ and $d(\beta, \alpha; 1)$ when one of $\alpha$ and $\beta$ is negative can be found in \cite[Appendix C]{Nollen2021}. 
 \end{remark}

Now, consider the case where $\gamma=0$, $\alpha = (3+\sqrt{21})/6$ and $\beta=\sqrt{3}$. Then $d(\alpha,\beta;1) = (1,0,0,1,\overline{0})$ and $d^*(\alpha,\beta;1)=(\overline{1,0,0,0})$. Meanwhile, the minimal polynomial of $\sqrt{3}$ over $\mathbb{Q}(\alpha)$ is $x^2-3$. 
Since $\sqrt{3}> \varphi / \alpha$, then any of $d(\beta,\alpha;1)$ and $d^*(\beta,\alpha;1)$ cannot be eventually periodic by Corollary \ref{boundforlambda}. So, $(\beta,\alpha)$ is of Type 1. By Proposition \ref{5cases}, $d(\beta,\alpha;1) = d^*(\beta,\alpha;1)$.

In other words, $d(\alpha,\beta;1)$ and $d(\beta,\alpha;1)$ (or $d^*(\alpha,\beta;1)$ and $d^*(\beta,\alpha;1)$) may not be simultaneously eventually periodic. Hence, in place of Definition \ref{parrytuple}, we introduce this broader definition.
 
 \begin{definition}
    Let $\alpha, \beta \in \mathbb{R}$ such that $|\alpha \beta|>1$, we say that the pair $(\alpha,\beta)$ is a semi-Parry pair if $d^*(\alpha,\beta;\gamma+1)$ and $d(\alpha,\beta;\gamma)$ are both eventually periodic. If $(\alpha,\beta)$ and $(\beta,\alpha)$ are both semi-Parry pairs, we say that $(\alpha,\beta)$ is a Parry pair.
 \end{definition}

We consider the following examples.

 \begin{example}
     Let $\gamma=0$. 
         Suppose either $\alpha\in(0,1]$ and $\beta>0$ such that $\alpha\beta>1$ is a Parry number. 
         Then $d^*(\alpha,\beta;1)$ is periodic.
         If in addition, $d(\alpha\beta;\beta-\lfloor\beta\rfloor)$ is periodic, then $d^*(\beta,\alpha;1)$ is also periodic which means $(\alpha,\beta)$ is a Parry pair.
         For example, let $\varphi:=(1+\sqrt{5})/2$.
         Let $\alpha=\varphi^{-1}\in(0,1]$ and $\beta=\varphi^{2}$.
         Note that $\alpha\beta=\varphi$ is a Parry number.
         Then $d^*(\alpha,\beta;1)$ is periodic.
         Indeed, $d^*(\alpha,\beta;1) = (\overline{0,1,0,0})$.
         Moreover, $\alpha\beta = \varphi$ is a Pisot number and $\beta-\lfloor\beta\rfloor = 1/\varphi\in \Q(\alpha\beta)$.
         By Theorem 3.1 of \cite{Schmidt1980}, $d(\alpha\beta;\beta-\lfloor\beta\rfloor)$ is periodic.
         Hence, $d^*(\beta,\alpha;1)$ is also periodic.
         Indeed, $d^*(\beta;\alpha;1)=(2,\overline{0,0,0,1})$.
         Therefore, $(\alpha,\beta)$ is a Parry pair.
\end{example}

\begin{example}
    Let $\gamma=0$. 
         Let $\alpha = (1+\sqrt{5})/2$ and $\beta=-\alpha$. 
         Then $d^*(\alpha,\beta,1) = (\overline{1,-1,0,-1})$ and $d^*(\beta,\alpha,1) = (-2,0,-2,d^*(\alpha,\beta,1))$. 
         So, $(\alpha,\beta)$ is a Parry pair.
 \end{example}

Suppose $(\alpha,\beta)$ is a semi-Parry pair. By Proposition \ref{algebraic}, $\alpha$ is algebraic over $\Q(\beta)$. If $\lambda$ is a nontrivial conjugate of $\alpha$ over $\Q(\beta)$, then 
$|\lambda| \le (1+\sqrt{5})/ (2 \beta)$.
 
\subsection{Structure of Conjugates of $\alpha$}

In this section, we study some algebraic properties of $\alpha$ and $\beta$ when $(\alpha,\beta)$ is semi-Parry
and $\beta$ is fixed by the conjugate map $\Q(\alpha) \longrightarrow \Q(\lambda)$ given by $\alpha \mapsto \lambda$ for any conjugate $\lambda$ of $\alpha$ over $\Q(\beta)$.

\begin{definition} Let $\gamma\in \Q(\beta)$. For $x\in [\gamma, \gamma+1]$ (with $\gamma \neq -1$  if $x = \gamma+1$), we define the 
following sets:
\begin{enumerate}
    \item $\Phi(x,\beta)$ is the set of all conjugates $\lambda$ of all $\alpha\in\R$ such that $|\lambda\beta|>1$ and  $d(\alpha,\beta;x)$ is eventually periodic.
    \item $\B(x,\beta)$ is the set of all functions 
    \[f(z)= x+\sum_{j=1}^\infty\dfrac{d_j}{\beta^j}z^j \mbox{ with } d_j\in [\gamma,\gamma+1].\]
    \item $\G(x,\beta):=\{z: |z|<|\beta| \mbox{ and } f(z)=0\text{ for some }f\in\B(x,\beta)\}$.
    \item $\G(x,\beta)^{-1}:=\{1/z:z\in\G(x,\beta)\}$.
\end{enumerate}
\end{definition}

In Theorem \ref{equality} below, we show that 
$\G(x,\beta)$ and $\overline{\Phi(x,\beta)}$ are equal when $\gamma =0$ and $\beta>1$. We also provide an analogous statement when $0<\beta<1$.

From \cite{Solomyak1994}, $\B:=\left\{f(z)= 1+\sum_{j=1}^\infty d_j z^j \mid d_j\in [0,1]\right\}$
is compact under the topology induced by seminorms $f\mapsto\max\{|f(z)|:|z|=r\}$ with $r<1$. 
Now, $\B(x,1)=\B+g(z)$ where $g(z)=x-1+\sum_{j=1}^\infty \gamma z^j$. 
Hence, $\B(x,1)$ is also compact.

Let $C(\beta)^{-1}$ be the circle centered at the origin of radius $1/|\beta|$. Then 
$\G(x,\beta)^{-1}\cup C(\beta)^{-1}$ is obtained by scaling by a factor of $1/\beta$ the set
$\G(x,1)^{-1}\cup C(1)^{-1}$, which is closed. Hence, $\G(x,\beta)^{-1}\cup C(\beta)^{-1}$ is also closed.

\begin{theorem} \label{forwardinclusion}
    Let $\lambda\in \overline{\Phi(x,\beta)}$ with $|\lambda|>1/|\beta|$. Let $\gamma\in\Q(\beta)$. Then $1/\lambda$ is a zero of a function $f\in \B(x,\beta)$; that is, 
    \[\lambda\in\G(x,\beta)^{-1}.\]
\end{theorem}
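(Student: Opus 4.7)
The plan is to first verify $1/\lambda\in \G(x,\beta)$ for $\lambda\in \Phi(x,\beta)$ itself (with $|\lambda\beta|>1$), by turning the eventually-periodic expansion of $x$ with respect to the base $(\overline{\alpha,\beta})$ into a power series vanishing at $1/\lambda$. Then I will pass to the closure by invoking the closedness of $\G(x,\beta)^{-1}\cup C(\beta)^{-1}$ already recorded in the remarks preceding the theorem.

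\textbf{Step 1 (handling $\Phi(x,\beta)$).}
If $\lambda\in\Phi(x,\beta)$, then $\lambda$ is a nontrivial conjugate over $\Q(\beta)$ of some $\alpha\in\R$ for which $d(\alpha,\beta;x)$ is eventually periodic. I apply the derivation leading to equation \eqref{eq:2} with base $B=(\overline{\alpha,\beta})$ (so $n=2$) and $i=1$, so that $y=T^0(x)=x$ and $B'[2,n]=\beta$. Because $\gamma\in\Q(\beta)$, the conjugate map $\alpha\mapsto\lambda$ fixes $\gamma$, $\beta$, and therefore $B'[2,n]$, verifying the hypothesis needed. Equation \eqref{eq:2} then reads
\[
x+\sum_{j=1}^\infty\frac{T^{2j}(x)}{(\lambda\beta)^j}=0.
\]
Substituting $z=1/\lambda$ and $d_j=T^{2j}(x)\in[\gamma,\gamma+1)$ turns this into
\[
f(z)=x+\sum_{j=1}^\infty\frac{d_j}{\beta^j}z^j=0,
\]
with $f\in\B(x,\beta)$. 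The hypothesis $|\lambda|>1/|\beta|$ gives $|z|<|\beta|$, so $1/\lambda\in\G(x,\beta)$ and hence $\lambda\in\G(x,\beta)^{-1}$.

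\textbf{Step 2 (closure).}
Pick a sequence $\lambda_k\in\Phi(x,\beta)$ with $\lambda_k\to\lambda$. Since $|\lambda|>1/|\beta|$, the inequality $|\lambda_k|>1/|\beta|$ holds for all sufficiently large $k$, so Step 1 yields $\lambda_k\in\G(x,\beta)^{-1}\subseteq \G(x,\beta)^{-1}\cup C(\beta)^{-1}$ for those $k$. The paragraphs before the theorem established that $\G(x,\beta)^{-1}\cup C(\beta)^{-1}$ is closed (as the image under $z\mapsto z/\beta$ of the closed set $\G(x,1)^{-1}\cup C(1)^{-1}$, whose closedness in turn follows from Solomyak's compactness of $\B$, hence of $\B(x,1)$). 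Therefore $\lambda\in\G(x,\beta)^{-1}\cup C(\beta)^{-1}$; since $|\lambda|>1/|\beta|$ excludes $\lambda$ from $C(\beta)^{-1}$, we conclude $\lambda\in\G(x,\beta)^{-1}$.

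\textbf{Main obstacle.} The chief subtlety lies in Step 1, where one must ensure that the coefficients $d_j=T^{2j}(x)$ appearing in the resulting power series are the \emph{original} real values in $[\gamma,\gamma+1)$ rather than their images under the conjugation $\alpha\mapsto\lambda$; this is the content of the identification that allows $f$ to sit in $\B(x,\beta)$ with the correct coefficient interval. It goes through cleanly when the orbit values $\{T^{2j}(x)\}$ lie in $\Q(\beta)$ (in particular when $x\in\Q(\beta)$), so that the conjugate map fixes them. Step 2 is then a soft topological argument relying only on the closedness property already isolated in the text, so no serious new work is required there.
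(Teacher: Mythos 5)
Your proof is correct and follows the same two-step route as the paper: Eq.~(\ref{eq:2}) places each $\lambda\in\Phi(x,\beta)$ in $\G(x,\beta)^{-1}$, and the closedness of $\G(x,\beta)^{-1}\cup C(\beta)^{-1}$ together with $|\lambda|>1/|\beta|$ handles the limit points. Your closing remark that the coefficients $T^{2j}(x)$ and the value $x$ itself must be fixed by the conjugation $\alpha\mapsto\lambda$ identifies a genuine subtlety that the paper leaves implicit, and your resolution (the digits are integers, $\beta$ and $\gamma$ are fixed, and $x$ lies in $\Q(\beta)$ in the intended applications) is the right one.
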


\begin{proof}
     If $\lambda\in\Phi(x,\beta)$, then  $1/\lambda\in\G(x,\beta)$ by Eq. (\ref{eq:2}). Then $\lambda\in\G(x,\beta)^{-1}$. 
     Now, suppose $\lambda$ is a boundary point of $\Phi(x,\beta)$. Then $\lambda\in \G(x,\beta)^{-1}\cup C(\beta)^{-1}$.
     However, $\lambda\notin C(\beta)^{-1}$. So, $\overline{\Phi(x,\beta)}\subseteq\G(x,\beta)^{-1}$.
\end{proof}

We give a partial converse of Theorem  \ref{forwardinclusion} by  considering the restriction $\gamma=0$ and $\beta >0$.

\begin{theorem} \label{equality}
    Let $\gamma=0$ and $\beta>0$. The following holds:
    \begin{enumerate}
        \item If $\beta\geq 1$, then $\overline{\Phi(1,\beta)}=\G(1,\beta)^{-1}$.
        \item If $0<\beta<1$, then $\overline{\Phi(1,\beta)}=\G'(1,\beta)^{-1}$
        where $$ \G'(1,\beta)^{-1} = \left\{1/\lambda: |\lambda|<\beta\text{ and } 1+\sum_{j=1}^\infty \dfrac{d_j}{\beta}\lambda^j=0, d_j\in [0,\beta]\right\}.$$
    \end{enumerate}
\end{theorem}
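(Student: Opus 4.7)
For case~(1), the forward inclusion $\overline{\Phi(1,\beta)}\subseteq\G(1,\beta)^{-1}$ is immediate from Theorem~\ref{forwardinclusion}: every $\lambda\in\Phi(1,\beta)$ has $|\lambda|>1/|\beta|$, so the closedness of $\G(1,\beta)^{-1}\cup C(\beta)^{-1}$ captures limit points and Theorem~\ref{forwardinclusion} excludes the circle $C(\beta)^{-1}$. For case~(2), I would first establish a parallel forward inclusion $\overline{\Phi(1,\beta)}\subseteq\G'(1,\beta)^{-1}$ by exploiting the regime $0<\beta<1$ and $\gamma=0$ to note that $\mathcal{A}(\beta)=\{0\}$ forces $a_{2j}=0$, and then using the identity $T^{2j}=\beta T^{2j-1}$ to rearrange Eq.~\eqref{eq:2} so that the coefficients of $(1/\lambda)^j$ fall into the range $[0,1]$ defining $\G'$; closedness of $\G'(1,\beta)^{-1}\cup C(\beta)^{-1}$ would then follow by a direct compactness argument on $[0,1]^{\N}$ together with uniform convergence of the series on compacta in $|\lambda|<\beta$.

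The substantive direction in both cases is the reverse inclusion. Fix $\lambda_0$ in the right-hand set and a power-series witness $f$ that vanishes at $z_0=1/\lambda_0$. The plan is, for each large $N$, to construct a semi-Parry $\alpha_N$ whose $(\alpha_N,\beta)$-expansion of $1$ is eventually periodic and whose associated Galois conjugate $\lambda_N$ over $\Q(\beta)$ lies within $1/N$ of $\lambda_0$. Following Solomyak, I would truncate $f$ to a polynomial $\tilde f_N$ of degree $N$ and attach a periodic geometric tail matched to the shape obtained from an eventually periodic expansion of $1$ after applying the substitution $\alpha\mapsto X$ of Proposition~\ref{algebraic}; an application of Hurwitz's theorem on a closed subdisk of the domain of convergence then yields a zero $z_N$ of $\tilde f_N$ with $z_N\to z_0$. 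Because the coefficients of $f$ are arbitrary reals in $[0,1]$ (respectively $[0,\beta]$ for case~(2)) whereas admissible periodic digit strings $(c_1,\dots,c_{2M})\in\mathcal{A}(\alpha)\times\mathcal{A}(\beta)\times\cdots$ yield only structured coefficients, a density argument is needed: as the digit strings vary, the trajectory values $T^{2j}$ fill $[0,1]$ (resp.\ $[0,\beta]$) densely, and the compactness of $\B(1,\beta)$ in the seminorm topology allows a diagonal extraction of an honest periodic expansion whose polynomial relation approximates $\tilde f_N$.

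The main obstacle, and the explicit novelty of this theorem compared with \cite{Solomyak1994}, is to verify that the polynomial $P_N(X)\in\Q(\beta)[X]$ obtained by clearing denominators in $1=\sum_{i=1}^\infty c_i/B[i]$ is \emph{irreducible over $\Q(\beta)$}; without this, the root $\lambda_N$ near $z_0^{-1}$ need not be a Galois conjugate of $\alpha_N$ and could therefore fail to lie in $\Phi(1,\beta)$. My plan is to establish irreducibility via a Newton polygon argument for $P_N$ at a suitable place of $\Q(\beta)$, adapting the Kronecker-style argument Solomyak used over $\Q$ in the classical setting. If a fixed periodic word happens to leave $P_N$ reducible, the backup is to perturb the preperiod of the digit string so as to rule out the unwanted factorization while preserving $|\lambda_N-\lambda_0|<1/N$, relying on continuous dependence of the roots on the coefficients. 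Combining Hurwitz, density, and irreducibility then delivers $\lambda_N\in\Phi(1,\beta)$ with $\lambda_N\to\lambda_0$, completing the reverse inclusion in both cases.
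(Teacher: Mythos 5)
Your high-level architecture matches the paper's: forward inclusion from Theorem~\ref{forwardinclusion} (and, for $0<\beta<1$, the observation that $\mathcal{A}(\beta)=\{0\}$ forces $T^{2j}=\beta T^{2j-1}\in[0,\beta]$), and a reverse inclusion by truncation, realization as an eventually periodic expansion, Rouch\'e/Hurwitz, and irreducibility over $\Q(\beta)$. But two of your steps are gapped in ways that would break the argument. The first is the realization step. You propose that ``as the digit strings vary, the trajectory values $T^{2j}$ fill $[0,1]$ densely'' and that compactness of $\B(1,\beta)$ plus a diagonal extraction yields an admissible periodic expansion approximating the truncation. For any fixed base the attainable coefficient sequences are severely constrained (the digits must be integers in $\mathcal{A}(\alpha)\times\mathcal{A}(\beta)\times\cdots$), and density of orbit values is not the mechanism. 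What actually works is to approximate the truncated coefficients by rationals $M_j/M_0$ and then scale by a large integer $N$: one takes the digit string $(NM_0,0,NM_1,0,\dots,NM_n,\overline{0})$ and lets $\alpha=\theta$ be a large real root of $F_N(Y)=Y^{n+1}-N\sum_{j=0}^n(M_j/\beta^j)Y^{n-j}$, so that $\theta\approx NM_0$ and the normalized digits $NM_j/\theta$ recover $M_j/M_0\approx d_j$. Admissibility must then be verified against the criterion of \cite{Dem2020}; in the case $0<\beta<1$ this is not automatic because the even shifts and $d^*(\beta,\theta;1)$ both begin with the digit $0$, so one needs an extra condition such as $N(M_0\beta-M_j)>1$. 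Your sketch never produces integer digits lying in $\mathcal{A}(\theta)$, which is the whole difficulty.

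The second gap is irreducibility. A Newton-polygon (equivalently Eisenstein) argument does work, but only because the \emph{same} parameter $N$ multiplies every non-leading coefficient of $F_N$; you have not said where a prime enters your $P_N$, and an arbitrary eventually periodic digit string gives no such structure. Moreover ``a suitable place of $\Q(\beta)$'' tacitly assumes $\beta$ is algebraic; the transcendental case needs a separate (easier) treatment in the UFD $\Z[\beta]$, where rational primes stay prime. For algebraic $\beta$ one must write $\beta=q_1/q_2$ with $q_1,q_2\in\mathcal{O}_{\Q(\beta)}$, clear denominators, and choose $N$ to be a rational prime, unramified in $\Q(\beta)$, lying in a prime ideal $\p$ avoiding $q_2$ and $q_1M_n$ (so the constant term is in $\p\setminus\p^2$); showing that infinitely many such $N$ exist is itself a short but necessary argument, and Eisenstein must be applied in the localization $(\mathcal{O}_{\Q(\beta)})_{\p}$, since $\mathcal{O}_{\Q(\beta)}$ need not be a UFD. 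Your fallback of perturbing the preperiod is neither needed nor safe: it would have to preserve admissibility and the location of the root near $\delta$ simultaneously, and you give no reason it kills every unwanted factorization.
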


\subsection{Proof of Theorem \ref{equality}}
We divide the proof into two cases ---  Case 1: $\beta>1$ and Case 2: $0<\beta<1$.
\subsubsection{Case 1: $\beta>1$} \label{greaterthanone} We first consider the case where $\beta\geq 1$. Note that the forward inclusion follows from Theorem  \ref{forwardinclusion}.

 Let $\lambda \in \G(1,\beta)^{-1}$. Then  $\delta:=1/\lambda $ is a root of some function 
    \[f(z)=1+\sum_{j=1}^\infty \dfrac{d_j}{\beta^j}z^j\]
    where $d_j\in [0,1]$ and $|\delta|<|\beta|$.
 Let $0<\varepsilon< \min\{(|\beta|-|\delta|)/2,|\delta|/2 \}$ 
    such that $f(z)\neq0$ 
    whenever $0<|z-\delta|<\varepsilon$. 
    Let $K:=\min\{|f(z)|: |z-\delta|=\varepsilon\}$. 
        
    We approximate $f(z)$  by the truncated series
    \[f_n(z)=\sum_{j=0}^n \dfrac{d_j}{\beta^j}z^j\]
    where $d_0=1$ and $n\in\N$ is chosen such that
    \[|f(z)-f_n(z)|<K/3\]
    for $|z-\delta|<|\beta|$.
    
        In turn, we approximate $f_n(z)$ by a function 
        \[g(z)=\sum_{j=0}^n\dfrac{M_j}{Q\beta^j}z^j\]
        where $Q\in \mathbb{N}$, $M_j \in \mathbb{N} \cup \{0\}$ with $M_0=Q$ such that 
        the rational coefficient $M_j/Q \in (0,1)$ approximates $d_j$ for $j \neq 0$ and $|g(z)-f_n(z)|<K/3$.
        Since $M_j/Q \neq 0,1$ for $j \neq 0$, then $M_0 > M_j >0$.
        
        Since $\gamma=0$ and $\beta>0$, then $0\in\mathcal{A}(\beta)$.
        Let  $N\in\N$ and  $Y$ be a variable. We consider the realization of the digit sequence $(NM_0,0,NM_1,0,\dots,NM_n,\overline{0})$ as a $B$-expansion where 
        $B=(\overline{Y, \beta})$, and suppose that this is the $B$-expansion of 1, i.e.,
        
        \begin{align} 
        1&=\sum_{j=0}^n\dfrac{NM_j}{Y^{j+1}\beta^j}. \label{eq:3}
        \end{align}
        It follows that
        \[Y^{n+1}-N\sum_{j=0}^n \dfrac{M_j}{\beta^j}Y^{n-j}=0. \]
        Let
        \[ q(Y):=Y^{n+1}\]
        and
        \[h(Y):=\sum_{j=0}^n \dfrac{M_j}{\beta^j}Y^{n-j}.\]
        The solutions to Eq. (\ref{eq:3}) are precisely the roots of 
        \[F_N(Y):=q(Y) - N h(Y).\]
        
        Since $\lim_{Y\rightarrow\infty} h(Y)=\infty$, 
        we can find $r>1$ such that $h(r)>0$. 
        We can choose $N$ large enough such that $F_N(r)<0$. Since $\lim_{Y\rightarrow\infty} F_N(Y)=\infty$, then  $F_N$ has a root $\theta$ (dependent on the choice of $N$ ) greater than $r$.

        \vspace{0.5cm}
        \noindent 
        \textbf{Claim 1.} $d(\theta, \beta;1)=(NM_0,0,NM_1,0,\dots,NM_n,\overline{0})$
        
        \noindent 
        \textbf{Proof of Claim 1.} We need to show that $NM_j \in \mathcal{A}(\theta)$, i.e.,
        $0 < \theta - NM_j<1$.
       From Eq. (\ref{eq:3}), we get
        \[1= \frac{NM_0}{\theta}+ \frac{1}{\theta}\sum_{j=1}^n \frac{NM_j}{\theta^j\beta^j}.\]
        Hence,
        \[\theta-NM_j\geq \theta-NM_0 =  \sum_{j=1}^n \frac{NM_j}{\theta^j\beta^j} > 0.\]
        It follows that $0<NM_j<\theta$ for all $j\ge 0$ and, in this case, $NM_j \in \mathcal{A}(\theta)$.  
        
    Finally, observe that 
    \begin{align*}
        \theta -NM_0&=\sum_{j=1}^{n}\dfrac{NM_j}{\theta^{j}\beta^j}\leq \sum_{j=1}^{\infty}\dfrac{NM_0-1}{\theta^{j}\beta^j}\\
        &= \dfrac{NM_0-1}{\theta\beta-1}<\dfrac{NM_0-1}{\theta-1}<\dfrac{\theta-1}{\theta-1}=1.
    \end{align*}
    In summary, we have
      \begin{equation}
        NM_0\leq\theta<NM_0+1.\label{eq:4}
    \end{equation}
    
    Hence, by the admissibility criterion provided in \cite[Theorem~4.12]{Dem2020}, the $(\theta,\beta)$-expansion of $1$ is
    \[d(\theta, \beta;1)=(NM_0,0,NM_1,0,\dots,NM_n,\overline{0}).\]
    Indeed, every odd-power shift of $(NM_0,0,NM_1,0,\dots,NM_n,\overline{0})$ is lexicographically less than $d(\theta, \beta;1)$. Moreover, the first digit of  $d^*(\beta,\theta;1)$ is $\lfloor\beta\rfloor\geq1$ since $\beta>1$. 
    Hence, every even-power shift $(0,NM_j,\dots)$ of $(NM_0,0,NM_1,0,\dots,NM_n,\overline{0})$ is lexicographically less than $d^*(\beta,\theta;1)$.
 
   \vspace{0.5cm}
   
   \noindent
   \textbf{Claim 2.} $F_N(Y)=Y^{n+1}-N\sum_{j=0}^n \frac{M_j}{\beta^j}Y^{n-j}$ is irreducible over $\Q(\beta)$ for infinitely many $N\in\N$.
   
   \noindent
   \textbf{Proof of Claim 2.}
   
  We divide the proof of this claim into two cases: $\beta$ is algebraic and $\beta$ is transcendental over $\mathbb{Q}$. To proceed, we first develop an irreducibility criterion that we can apply to  polynomials over  $\mathbb{Q}(\beta)$.
   
  \begin{lemma}[Eisenstein's Irreducibility Criterion, {\cite[Chap. 3, Theorem~6.15]{hungerford_algebra_2003}}] \label{eisen}
      Let $D$ be a unique factorization domain.
      If $f =\sum_{i=0}^n a_ix^i\in D[x]$ with $\deg(f)\geq 1$ and $p$ is an irreducible element of $D$ such that 
      \[ p \nmid a_n;\quad \quad  p\mid  a_i \text{ for }i=0,1,\dots,n-1;\quad\quad   p^2 \nmid a_0, \]
      then $f$ is irreducible over the field $\mathrm{Frac}(D)$ of fractions of $D$. If, in addition, $f$ is monic, then $f$ is irreducible over $D$.
  \end{lemma}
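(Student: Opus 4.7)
The plan is to prove the criterion by reducing modulo $p$ and exploiting the integral domain structure of the quotient, invoking Gauss's lemma to pass between $D[x]$ and $\mathrm{Frac}(D)[x]$. First I would recall that since $D$ is a UFD and $p$ is irreducible, the ideal $(p)$ is prime, so the quotient $\bar{D} := D/(p)$ is an integral domain and $\bar{D}[x]$ is as well. By Gauss's lemma for UFDs, a primitive polynomial in $D[x]$ is reducible over $\mathrm{Frac}(D)$ if and only if it admits a nontrivial factorization in $D[x]$; so it suffices to rule out factorizations $f=gh$ in $D[x]$ with $1\leq\deg g,\deg h<n$.

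Next I would assume for contradiction such a factorization exists, say $g=\sum_{i=0}^s b_i x^i$ and $h=\sum_{j=0}^t c_j x^j$ with $s+t=n$ and $s,t\geq 1$. Reducing modulo $p$, the congruences $p\mid a_i$ for $i<n$ and $p\nmid a_n$ force $\bar{f}=\bar{a_n}x^n$ in $\bar{D}[x]$, so $\bar{g}\,\bar{h}=\bar{a_n}x^n$. Since $\bar{D}[x]$ is an integral domain and $x$ is prime there, the usual uniqueness of factorization (or a direct degree-and-leading-coefficient argument) forces $\bar{g}=\bar{b_s}x^s$ and $\bar{h}=\bar{c_t}x^t$ with $s,t\geq 1$. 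In particular $\bar{b_0}=\bar{c_0}=0$, so $p\mid b_0$ and $p\mid c_0$, whence $p^2\mid b_0c_0=a_0$, contradicting $p^2\nmid a_0$. This gives irreducibility over $\mathrm{Frac}(D)$.

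For the monic addendum, I would argue as follows: suppose $f$ is monic and admits a factorization $f=gh$ in $D[x]$ with neither factor a unit of $D[x]$ (equivalently, neither a unit of $D$). By the part already proved, $f$ is irreducible over $\mathrm{Frac}(D)$, so one of $g,h$ must be a nonzero constant in $\mathrm{Frac}(D)$, hence (lying in $D[x]$) a constant in $D$; say $g=b_0\in D$. Comparing leading coefficients in $f=b_0 h$, since $f$ is monic we get that $b_0$ divides $1$ in $D$, so $b_0$ is a unit, contradicting our assumption. Thus no nontrivial factorization exists in $D[x]$ either.

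The only subtle step is the reduction-mod-$p$ argument, which relies crucially on $(p)$ being prime — this is where the UFD hypothesis (converting irreducibility of $p$ into primality) is used. Everything else is routine bookkeeping with Gauss's lemma and comparison of leading coefficients.
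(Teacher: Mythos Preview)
Your proof is correct and is precisely the standard textbook argument for Eisenstein's criterion. Note, however, that the paper does not supply its own proof of this lemma: it is simply quoted from Hungerford (\cite[Chap.~3, Theorem~6.15]{hungerford_algebra_2003}) and used as a black box, so there is nothing to compare against. One tiny remark: when you invoke Gauss's lemma you implicitly treat $f$ as primitive; this is harmless here since $p\nmid a_n$ forces $p\nmid\mathrm{cont}(f)$, so passing to the primitive part preserves all three Eisenstein hypotheses and irreducibility over $\mathrm{Frac}(D)$ is unaffected by constant factors.
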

  
   Now, suppose that $\beta$ is transcendental over $\mathbb{Q}$. Note that $\Q(\beta)=\mathrm{Frac}(\Z[\beta])$ and $\Z[\beta]$ is a UFD. 
   Take $N$ to be a prime larger than $M_n$. By Lemma \ref{eisen}, $F_N$ is irreducible over $\Q(\beta)$ since the rational primes are also primes in $\Z[\beta]$. 
   
  Next, suppose that $\beta$ is algebraic over $\mathbb{Q}$. Then $\Q(\beta)$ is a number field. Denote by $\mathcal{O}_{\Q(\beta)}$ its ring of integers.
  
  Note that $F_N(Y)$ is irreducible over $\Q(\beta)$ 
  if and only if
  \[\beta^n  F_N(Y/\beta)= \dfrac{Y^{n+1}}{\beta}-N\sum_{j=0}^n M_jY^{n-j}\]
  is irreducible over $\Q(\beta)$.
  Let $\beta = q_1/q_2$ where $q_1,q_2\in \mathcal{O}_{\Q(\beta)}$. 
  Then $F_N(Y)$ is irreducible over $\Q(\beta)$
  if and only if 
  \[t_N(Y):=q_1\beta^n  F_N(Y) = q_2Y^{n+1}-Nq_1\sum_{j=0}^n M_jY^{n-j}\]
  is irreducible over $\Q(\beta)$. 
  
  We want to apply Lemma \ref{eisen} to $t_N\in \mathcal{O}_{\Q(\beta)}[Y]$. However, $\mathcal{O}_{\Q(\beta)}$ is not necessarily a UFD (e.g. when $\beta=\sqrt{5}$). To fix this, we embed $t_N$ to 
  the localization $(\mathcal{O}_{\Q(\beta)})_\mathfrak{p}$ where $\mathfrak{p}$ is a prime ideal of $\mathcal{O}_{\Q(\beta)}$.
  Note that $(\mathcal{O}_{\Q(\beta)})_\mathfrak{p}$ is an intermediate ring of $\mathcal{O}_{\Q(\beta)}$ and $\Q(\beta)$.

  \begin{lemma}[{\cite[Chapter~1 Proposition~11.2]{Neukirch1999}}]\label{pidlemma}
      Let $\mathfrak{p}$ be a prime ideal of a Dedekind domain $\mathcal{O}$. Then the localization $\mathcal{O}_\mathfrak{p}:=\{a/b: a,b\in \mathcal{O}, b\notin\mathfrak{p}\}$ is a principal ideal domain and hence a UFD.
  \end{lemma}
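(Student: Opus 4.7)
The plan is to prove $\mathcal{O}_\mathfrak{p}$ is a PID by showing that every nonzero ideal is a nonnegative power of a single uniformizer — in other words, by identifying $\mathcal{O}_\mathfrak{p}$ as a discrete valuation ring. The two features of the Dedekind hypothesis I will lean on are (i) every nonzero prime ideal of $\mathcal{O}$ is maximal (one-dimensionality), and (ii) every nonzero ideal of $\mathcal{O}$ factors uniquely as a product of prime ideals.

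The first step is to recall the standard ideal correspondence for localization: every ideal of $\mathcal{O}_\mathfrak{p}$ has the form $I\mathcal{O}_\mathfrak{p}$ for some ideal $I$ of $\mathcal{O}$, and prime ideals of $\mathcal{O}_\mathfrak{p}$ correspond bijectively to prime ideals of $\mathcal{O}$ contained in $\mathfrak{p}$. Since $\mathcal{O}$ is one-dimensional, the only primes contained in $\mathfrak{p}$ are $(0)$ and $\mathfrak{p}$ itself, so $\mathcal{O}_\mathfrak{p}$ is local with maximal ideal $\mathfrak{m}:=\mathfrak{p}\mathcal{O}_\mathfrak{p}$. Next, for any nonzero ideal $I$ of $\mathcal{O}$ I would use unique factorization to write $I=\mathfrak{p}^e\prod_i\mathfrak{q}_i^{e_i}$ with each $\mathfrak{q}_i$ prime and distinct from $\mathfrak{p}$. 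Because each $\mathfrak{q}_i$ is maximal, $\mathfrak{q}_i\not\subseteq\mathfrak{p}$, so one can pick $x_i\in\mathfrak{q}_i\setminus\mathfrak{p}$; such $x_i$ becomes a unit in $\mathcal{O}_\mathfrak{p}$, forcing $\mathfrak{q}_i\mathcal{O}_\mathfrak{p}=\mathcal{O}_\mathfrak{p}$ and therefore $I\mathcal{O}_\mathfrak{p}=\mathfrak{m}^e$. Finally, I would choose a uniformizer $\pi\in\mathfrak{p}\setminus\mathfrak{p}^2$ (nonempty because unique factorization forces $\mathfrak{p}^2\subsetneq\mathfrak{p}$); the prime factorization of the principal ideal $(\pi)$ in $\mathcal{O}$ then contains $\mathfrak{p}$ with exponent exactly one, so by the preceding step $(\pi)\mathcal{O}_\mathfrak{p}=\mathfrak{m}$, and iteratively $(\pi^n)=\mathfrak{m}^n$. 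Combining with the first step, every nonzero ideal of $\mathcal{O}_\mathfrak{p}$ is of the form $(\pi^n)$, hence principal.

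The main obstacle is that the entire chain of deductions rests on the nontrivial theorem of unique factorization of ideals in a Dedekind domain — without it, the ``write $I=\mathfrak{p}^e\prod\mathfrak{q}_i^{e_i}$'' move is unjustified. If one wished to avoid invoking that theorem, an alternative route would be to verify the four DVR axioms for $\mathcal{O}_\mathfrak{p}$ directly (local, Noetherian, integrally closed, Krull dimension one) and then invoke the standard equivalence between such rings and DVRs. The delicate checkpoint in that alternative is showing that integral closedness is preserved under localization, which is handled by clearing denominators in integral dependence relations and exploiting that elements of $S=\mathcal{O}\setminus\mathfrak{p}$ remain invertible throughout.
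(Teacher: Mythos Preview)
Your argument is correct and is the standard proof that the localization of a Dedekind domain at a nonzero prime is a discrete valuation ring, hence a PID. However, there is nothing to compare against: the paper does not supply its own proof of this lemma but simply quotes it from Neukirch (Chapter~1, Proposition~11.2). Your sketch would serve perfectly well as a self-contained substitute for that citation.
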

  
  Let $\mathfrak{p}$ be a prime ideal of Dedekind domain $\mathcal{O}$. Observe that $\mathcal{O}\subseteq \mathcal{O}_\mathfrak{p}\subseteq \mathrm{Frac}(\mathcal{O}) $. 
  Since $\mathrm{Frac}(\mathcal{O}_\mathfrak{p})$ is the smallest field containing $\mathcal{O}_\mathfrak{p}$, then $ \mathrm{Frac}(\mathcal{O}) \supseteq \mathrm{Frac}(\mathcal{O}_\mathfrak{p})$.
  Moreover, $\mathcal{O}\subseteq \mathcal{O}_\mathfrak{p}\subseteq \mathrm{Frac}(\mathcal{O}_\mathfrak{p})$. 
  Since $\mathrm{Frac}(\mathcal{O})$ is the smallest field that contains $\mathcal{O}$, then $\mathrm{Frac}(\mathcal{O})\subseteq \mathrm{Frac}(\mathcal{O}_\mathfrak{p})$. Thus, $\mathrm{Frac}(\mathcal{O})= \mathrm{Frac}(\mathcal{O}_\mathfrak{p})$. 
  This allows us to extend Lemma \ref{eisen} to Dedekind domains such as $\mathcal{O}_{\Q(\beta)}$.
  
  \begin{lemma}[Extended Eisenstein's Criterion for Dedekind Domains] \label{exteisen}
   Let $\mathcal{O}$ be a Dedekind domain. If $f =\sum_{i=0}^n a_ix^i\in D[x]$ with $\deg(f)\geq 1$ and $\mathfrak{p}$ is a prime ideal of $\mathcal{O}$ such that 
   \[ a_n\notin\mathfrak{p}; \quad \quad   a_i\in\mathfrak{p} \text{ for }i=0,1,\dots,n-1;\quad\quad  a_0\notin\mathfrak{p}^2,\]
    then $f$ is irreducible over the field $\mathrm{Frac}(\mathcal{O})$ of fractions of $\mathcal{O}$.
     
  \end{lemma}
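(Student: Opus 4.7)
The plan is to reduce to the classical Eisenstein criterion (Lemma \ref{eisen}) via localization. Since $\mathcal{O} \subseteq \mathcal{O}_\mathfrak{p}$ and, as noted immediately before the statement, $\mathrm{Frac}(\mathcal{O}) = \mathrm{Frac}(\mathcal{O}_\mathfrak{p})$, it suffices to prove that $f$, viewed as an element of $\mathcal{O}_\mathfrak{p}[x]$, is irreducible over $\mathrm{Frac}(\mathcal{O}_\mathfrak{p})$. By Lemma \ref{pidlemma}, $\mathcal{O}_\mathfrak{p}$ is a PID (hence a UFD), so Lemma \ref{eisen} applies once a suitable irreducible element is produced.

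First I would observe that $\mathfrak{p}\mathcal{O}_\mathfrak{p}$ is the unique maximal ideal of the local PID $\mathcal{O}_\mathfrak{p}$, and hence is principal, generated by a single irreducible element $\pi \in \mathcal{O}_\mathfrak{p}$. Next I would transfer the three Eisenstein divisibility conditions from $\mathfrak{p} \subseteq \mathcal{O}$ to $(\pi) \subseteq \mathcal{O}_\mathfrak{p}$, using the contraction identity $\mathfrak{p}^k \mathcal{O}_\mathfrak{p} \cap \mathcal{O} = \mathfrak{p}^k$ valid for all $k \geq 0$. Concretely, $a_n \notin \mathfrak{p}$ lifts to $a_n \notin \pi\mathcal{O}_\mathfrak{p}$; each $a_i \in \mathfrak{p}$ for $0 \leq i \leq n-1$ gives $a_i \in \pi\mathcal{O}_\mathfrak{p}$; and $a_0 \notin \mathfrak{p}^2$ gives $a_0 \notin \pi^2 \mathcal{O}_\mathfrak{p}$.

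The main technical point, which I expect to be the principal (if modest) obstacle, is the contraction identity $\mathfrak{p}^k \mathcal{O}_\mathfrak{p} \cap \mathcal{O} = \mathfrak{p}^k$. I would derive it from unique factorization of nonzero ideals in the Dedekind domain $\mathcal{O}$: if $a \in \mathcal{O}$ satisfies $a = b/s$ with $b \in \mathfrak{p}^k$ and $s \in \mathcal{O} \setminus \mathfrak{p}$, then the principal ideal $(as) = (b)$ contains $(b) \subseteq \mathfrak{p}^k$, while the ideal $(s)$ is coprime to $\mathfrak{p}$; comparing $\mathfrak{p}$-exponents in the unique factorizations of $(a)(s)$ and $(b)$ forces the $\mathfrak{p}$-exponent of $(a)$ to be at least $k$, i.e., $a \in \mathfrak{p}^k$. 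The reverse inclusion $\mathfrak{p}^k \subseteq \mathfrak{p}^k \mathcal{O}_\mathfrak{p} \cap \mathcal{O}$ is immediate.

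With the contraction identity in hand, applying Lemma \ref{eisen} inside the UFD $\mathcal{O}_\mathfrak{p}$ with irreducible element $\pi$ yields that $f$ is irreducible over $\mathrm{Frac}(\mathcal{O}_\mathfrak{p}) = \mathrm{Frac}(\mathcal{O})$, completing the argument. The remainder of the proof is just bookkeeping: the localization viewpoint does all the real work.
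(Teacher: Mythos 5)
Your proof is correct and follows essentially the same route as the paper's: localize at $\mathfrak{p}$, use Lemma \ref{pidlemma} to see that $\mathcal{O}_\mathfrak{p}$ is a PID whose maximal ideal is generated by an irreducible element, and apply the classical criterion (Lemma \ref{eisen}) over $\mathrm{Frac}(\mathcal{O}_\mathfrak{p})=\mathrm{Frac}(\mathcal{O})$. You are in fact more careful than the paper, which passes to $\mathcal{O}_\mathfrak{p}[x]$ without checking that the Eisenstein hypotheses survive the base change; your verification of the contraction identity $\mathfrak{p}^k\mathcal{O}_\mathfrak{p}\cap\mathcal{O}=\mathfrak{p}^k$ via unique factorization of ideals supplies exactly that omitted detail.
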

  
  \begin{proof}
      By Lemma \ref{pidlemma}, $\mathcal{O}_\mathfrak{p}$ is a PID. Let $p$ be the generator of the prime ideal $\mathcal{O}_\mathfrak{p}\cap\mathfrak{p}$ of $\mathcal{O}_\mathfrak{p}$. Then $p$ is a prime element of $\mathcal{O}_\mathfrak{p}$ which is irreducible since $\mathcal{O}_\mathfrak{p}$ is a PID. Viewing $f\in \mathcal{O}_\mathfrak{p}[x]$ and applying Lemma \ref{eisen} complete the proof.
  \end{proof}

  Next, we show the existence of infinitely many prime number $N\in \mathbb{N}$ satisfying the following properties:
  \begin{enumerate}
      \item $N$ is unramified in $\Q(\beta)$, i.e.,  $N\notin\mathfrak{q}^2$ for any prime ideal $\mathfrak{q}$ of $\mathcal{O}_{\Q(\beta)}$;
      \item for some prime ideal $\mathfrak{p}$ of $\mathcal{O}_K$, we have $N\in\mathfrak{p}$ but $q_2,q_1M_n\notin\mathfrak{p}$.
      \end{enumerate} 
      We need the following lemmas.

\begin{lemma}[{\cite[Section~4.2]{ash_course_2010}}] \label{manyunramified}
   Let $\mathbb{Q}(\beta)$ be a number field and $p\in \mathbb{N}$ be a rational prime. Then $p$ is a ramified prime in $\mathbb{Q}(\beta)$ if and only if $p$ divides the discriminant  of $\mathbb{Q}(\beta)$.
\end{lemma}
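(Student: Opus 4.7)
The plan is to identify the reduction $d_K \bmod p$ (where $K = \mathbb{Q}(\beta)$) with the discriminant of the finite $\mathbb{F}_p$-algebra $\mathcal{O}_K/p\mathcal{O}_K$, and then to detect ramification of $p$ by the failure of the trace form on this algebra to be nondegenerate.

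First, I would fix an integral basis $\omega_1,\dots,\omega_n$ of $\mathcal{O}_K$ and recall that $d_K = \det\bigl(\mathrm{Tr}_{K/\mathbb{Q}}(\omega_i\omega_j)\bigr) \in \mathbb{Z}$. Reducing each entry modulo $p$ yields precisely the matrix of the $\mathbb{F}_p$-bilinear form $(x,y)\mapsto \mathrm{Tr}_{(\mathcal{O}_K/p)/\mathbb{F}_p}(xy)$ on the $\mathbb{F}_p$-vector space $\mathcal{O}_K/p\mathcal{O}_K$ in the basis $\bar\omega_1,\dots,\bar\omega_n$. Consequently $p \mid d_K$ is equivalent to the degeneracy of this trace form.

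Second, using the Chinese Remainder Theorem applied to the factorization $p\mathcal{O}_K = \mathfrak{p}_1^{e_1}\cdots\mathfrak{p}_r^{e_r}$, I would write $\mathcal{O}_K/p\mathcal{O}_K \cong \prod_i R_i$ where $R_i := \mathcal{O}_K/\mathfrak{p}_i^{e_i}$, so that the trace form splits as a block-diagonal sum of the local trace forms on each $R_i$. The problem then reduces to showing that the trace form on $R_i$ is nondegenerate if and only if $e_i = 1$.

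Third, in the unramified case $e_i=1$, $R_i$ is a finite (hence separable) extension of $\mathbb{F}_p$, so its trace form is nondegenerate by the standard theory of separable field extensions. In the ramified case $e_i \geq 2$, the maximal ideal $\mathfrak{m}_i \subset R_i$ is nonzero and nilpotent, and I would show that any nonzero $x \in \mathfrak{m}_i^{e_i-1}$ lies in the radical of the trace form: for every $y \in R_i$, the element $xy$ is nilpotent, so multiplication-by-$xy$ is a nilpotent $\mathbb{F}_p$-endomorphism of $R_i$, hence has trace zero. This nilpotent-kills-trace argument is the main technical step, though it is essentially linear algebra once the right elements are identified.

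Putting these pieces together, $p$ ramifies in $K$ iff some $e_i \geq 2$ iff the trace form on $\mathcal{O}_K/p\mathcal{O}_K$ is degenerate iff $p \mid d_K$, completing the proof.
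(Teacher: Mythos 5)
Your proof is correct. The paper does not prove this lemma at all --- it is quoted from the cited reference (Ash, Section 4.2) --- and your argument (reduce the discriminant matrix mod $p$ to the trace form on $\mathcal{O}_K/p\mathcal{O}_K$, split by CRT, use separability of the residue field extensions when $e_i=1$ and the fact that nilpotent multiplication operators have trace zero when $e_i\ge 2$) is precisely the standard proof given there, with all steps sound.
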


\begin{lemma}\label{niceunramified}
   Let $K$ be a number field and $\omega_1,\omega_2\in\mathcal{O}_K$. Then there are infinitely many $L\in\N$ such that there is prime ideal $\mathfrak{p}$ of $\mathcal{O}_K$ with $L\in\mathfrak{p}$ but $\omega_1,\omega_2\notin\mathfrak{p}$.
\end{lemma}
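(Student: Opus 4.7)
The plan is to show that the set of rational primes which can serve as $L$ is cofinite in the set of all rational primes, hence infinite. I assume throughout that $\omega_1, \omega_2$ are nonzero, since otherwise every prime ideal contains them and no such $L$ can exist; in the intended application ($\omega_1 = q_2$ and $\omega_2 = q_1M_n$) both are indeed nonzero.

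First, I would exploit the fact that $\mathcal{O}_K$ is a Dedekind domain, so each nonzero principal ideal $(\omega_i)$ factors uniquely as a finite product of prime ideals. Consequently the set
\[
S := \{\mathfrak{p} \subseteq \mathcal{O}_K \text{ prime} : \omega_1 \in \mathfrak{p} \text{ or } \omega_2 \in \mathfrak{p}\}
\]
is finite. Next, for each $\mathfrak{p} \in S$, the contraction $\mathfrak{p} \cap \mathbb{Z}$ is a nonzero prime ideal of $\mathbb{Z}$ (nonzero because $\mathfrak{p}$ contains the nonzero rational integer $N_{K/\mathbb{Q}}(\omega_i)$ for whichever $\omega_i$ lies in $\mathfrak{p}$), hence equals $p_\mathfrak{p}\mathbb{Z}$ for a unique rational prime $p_\mathfrak{p}$. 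Thus $T := \{p_\mathfrak{p} : \mathfrak{p} \in S\}$ is a finite set of rational primes.

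Finally, I would take $L = p$ to be any rational prime not in $T$; there are infinitely many such by Euclid. Factoring $p\mathcal{O}_K$ into prime ideals yields at least one prime factor $\mathfrak{p}$. Since $\mathfrak{p} \cap \mathbb{Z} = p\mathbb{Z}$ and $p \notin T$, the prime $\mathfrak{p}$ cannot lie in $S$, so neither $\omega_1$ nor $\omega_2$ lies in $\mathfrak{p}$, while $L = p \in \mathfrak{p}$ by construction. There is no real obstacle here: the argument rests only on unique factorization of nonzero ideals in a Dedekind domain and on the standard fact that every nonzero prime of $\mathcal{O}_K$ lies over a unique rational prime. The one point that merits a sentence in the written proof is verifying that $\mathfrak{p} \cap \mathbb{Z}$ is nonzero for $\mathfrak{p} \in S$, which is why the hypothesis $\omega_1,\omega_2 \neq 0$ is essential.
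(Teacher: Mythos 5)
Your proof is correct, and it proves exactly the lemma as stated. The overall strategy coincides with the paper's (only finitely many prime ideals of $\mathcal{O}_K$ can contain $\omega_1$ or $\omega_2$, by unique factorization of the ideals $(\omega_i)$ in a Dedekind domain, and there are infinitely many rational primes to choose from), but the mechanism by which you extract infinitely many good $L$ is different and cleaner. You contract each bad prime ideal $\mathfrak{p}\in S$ to the rational prime $p_{\mathfrak{p}}$ generating $\mathfrak{p}\cap\mathbb{Z}$, obtaining a finite exceptional set $T$ of rational primes, and then take any $L\notin T$ together with any prime of $\mathcal{O}_K$ above it. The paper instead restricts attention to \emph{unramified} rational primes, argues that distinct rational primes cannot lie in a common prime ideal of $\mathcal{O}_K$ (so that a choice $L\mapsto\mathfrak{l}_L$ of a prime above each $L$ has distinct images), invokes the finiteness of ramified primes to get infinitely many candidates, and then iterates to produce an infinite sequence. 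The ramification condition is not needed for the lemma as stated --- it is only needed later in the application, where it is supplied separately by the preceding lemma on discriminants --- so your route avoids an unnecessary detour and also replaces the paper's somewhat awkward inductive construction with a direct cofiniteness argument. You also correctly flag the implicit hypothesis $\omega_1,\omega_2\neq 0$, which the paper's statement omits but which is genuinely required (if some $\omega_i=0$ then every prime ideal contains it) and which does hold in the intended application.
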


\begin{proof}
    For $j\in\{1,2\}$, we have $(\omega_j)=\mathfrak{q_{j,1}}^{e_{j,1}}\cdots\mathfrak{q}_{j,n_j}^{e_{j,n_j}}$ where $q_{j,i}$'s are distinct prime ideal of $\mathcal{O}_K$ and $e_{j,i}\in\N$. Such configuration is unique up to order of the prime ideals. Consider the set 
    \[S=\{\mathfrak{q}_{j_i}: j\in\{1,2\}, i\in\{1,\dots,r_j\}\}.\]
    Then $S$ is finite. 
    
    Now, let $N_1,N_2$ be distinct (unramified) primes of $K$ and $\mathfrak{l}$ be a prime ideal of $\mathcal{O}_K$. Then $N_1,N_2\in \Z\cap\mathfrak{l}$. Note that $\Z\cap\mathfrak{l}$ is a prime ideal of $\Z$. Since $\Z$ is a PID, then $\Z\cap\mathfrak{l}=(L)$ for some rational prime $L$. So $N_1,N_2$ are both divisible by $L$ and since $N_1,N_2$ and $L$ are rational primes, then $N_1=N_2=L$, a contradiction. Hence, any two distinct (unramified) primes are not in the same prime ideal. Hence, the mapping $L\mapsto\mathfrak{l}_L$ such that $L\in\mathfrak{l}_L$ from the set of unramified primes to prime ideals of $\mathcal{O}_{\Q(\beta)}$ is one-to-one.
    
    Consider the set $T = \{\mathfrak{l}_L: L\text{ is unramified prime in }K\text{ and } L\in\mathfrak{l}_L\}$. Then $T$ is infinite by Lemma \ref{manyunramified}. Choose $\mathfrak{l}_L\in T\setminus S$. Then $L\in\mathfrak{l}_L$ but $\omega_1,\omega_2\notin \mathfrak{l}_L$. 
    
    Let $L=L_0=\omega_{3}$. We repeat the same process to obtain unramified prime $L_1$ such that $L_1\in\mathfrak{l}_{L_1}$ and $\omega_1,\omega_2,\omega_{3}\notin\mathfrak{l}_{L_1}$. We repeat the same process and we get a sequence $L_0,L_1,\dots$ or unramified primes in $K$ such that there is a prime ideal $\mathfrak{l}_j$ of $\mathcal{O}_K$ such that $L_j\in\mathfrak{l}_j$ but $\omega_1,\omega_2\notin\mathfrak{l}_j$.
\end{proof}

   From Lemma \ref{manyunramified}, we see that there are infinitely many unramified primes in $\mathcal{O}_{\Q(\beta)}$ since there are only finitely many ramified primes. 
   Taking $\omega_1=q_2$ and $\omega_2=q_1M_n$ in Lemma \ref{niceunramified}, we can take now $N$ to be desired prime that is unramified in $\mathcal{O}_{\mathbb{Q}(\beta)}$ and take an accompanying prime ideal $\mathfrak{p}$ of $\mathcal{O}_{\mathbb{Q}(\beta)}$ such that $N\in\mathfrak{p}$ but $q_2,q_1M_n\notin\mathfrak{p}$.
   Note that $q_1NM_n\notin\mathfrak{p}^2$ since $N$ is unramified. 
   Indeed, if $\mathfrak{p}^2$ divides $(q_1NM_n)=(N)(q_1M_n)$, we get a contradiction from the fact that $\mathfrak{p}^2$ does not divide  $(N)$ and $\mathfrak{p}$ does not divide $(q_1M_n)$.
   
   By Lemma \ref{exteisen}, the polynomial $t_N$ is irreducible over $\Q(\beta)$. 
   Finally, we emphasize that there are infinitely many choices of $N$ making  $F_N$ irreducible over $\mathcal{O}_{\Q(\beta)}$ since
    there are only finitely many ramified primes
    by Lemma \ref{manyunramified} and $q_2$ and $q_1M_n$ are located in only finitely many prime ideals.
   
   \vspace{0.5cm}

    \noindent
   \textbf{Claim 3.} For sufficiently large $N$,   $F_N$ has a root arbitrarily close to $\delta$.
    
    \noindent
    \textbf{Proof of Claim 3.}
    Observe that $$\dfrac{z^{n}F_N(\tfrac{1}{z})}{NM_0\beta}=\dfrac{1}{zNM_0}-g(z).$$ Then 
    
    \begin{align*}
       \left|\dfrac{z^{n}F_N(\tfrac{1}{z})}{NM_0\beta}+g(z)\right|=\dfrac{1}{|z|NM_0}<\dfrac{1}{N|z|}.
    \end{align*}
    Taking $N$ sufficiently large, we have
    
    \begin{align*}
        \left|\dfrac{z^{n}F_N(\tfrac{1}{z})}{NM_0\beta}+g(z)\right|<\dfrac{1}{N|z|}<\dfrac{2}{N|\delta|}<\dfrac{K}{3}.
    \end{align*}
    Thus, on $|z-\delta|\leq\varepsilon$, we have
    
   \begin{align*}
       \left|\dfrac{z^{n}F_N(\tfrac{1}{z})}{NM_0\beta}+f(z)\right|
      & =\left|\left(\dfrac{z^{n}F_N(\tfrac{1}{z})}{NM_0\beta}+g(z)\right)+(f_n(z)-g(z))+(f(z)-g(z))\right|
      \\& \leq   \left|\dfrac{z^{n}F_N(\tfrac{1}{z})}{NM_0\beta}+g(z)\right|+|f_n(z)-g(z)|+|f(z)-f_n(z)|\\&<\dfrac{K}{3}+\dfrac{K}{3}+\dfrac{K}{3} = K\leq |f(z)|.
    \end{align*}
    
    Since $\delta$ is a root of $f$, by Rouch\'{e}'s theorem, $F_N(1/z)$ has a root $\zeta$ satisfying $|\zeta-\delta|<\varepsilon$. Since $\varepsilon$ is arbitrarily small, we can assume that $1/\zeta$ is not a real root of $F_N$. Thus, $1/\zeta$ is a root is a conjugate of $\theta$ over $\Q(\beta)$.
    Observe that
    \[\left|\dfrac{1}{\zeta}-\lambda\right|=\left|\dfrac{\zeta-\delta}{\zeta\delta}\right|\leq \dfrac{2\varepsilon}{|\delta|^2}.\]
 Since $\varepsilon$ is arbitrarily small, then $\lambda\approx\nicefrac{1}{\zeta}$.
Therefore, $\lambda$ is close to a conjugate of $\theta$ where $(\theta,\beta)$ is semi-Parry.

\subsubsection{Case 2: $0<\beta<1$} 
Now, we consider the case where $0<\beta<1$. 
The forward inclusion $\overline{\Phi(1,\beta)} \subseteq \G'(1,\beta)^{-1}$ follows from Eq. (\ref{eq:2}) with the additional observation that when $0<\beta<1$ and $x\in[0,1]$,
then 
\[T^{2j}(x) = \beta x-\lfloor\beta x\rfloor = \beta x -0 = \beta x\in [0,\beta].\]
In other words, $d_j$ in Eq. (\ref{eq:2})  may be taken to be inside $[0,\beta]$.

For the backward inclusion, we  follow the same strategy used in Section \ref{greaterthanone}
with some slight modifications rooting from the following: for the  even shift of $(NM_0,0,NM_1,0,\dots,NM_n,\overline{0})$ to be lexicographically less than $d^*(\beta,\theta;1)$, an analysis beyond the first digits, which are both zero, is required. 

Now, let us lay down some observations. First, the rational coefficient $M_j/M_0$ satisfies $M_j/M_0<\beta$ for $j\ge 1$ since $d_j\in[0,\beta]$.
We want to exhibit that $(NM_0,0,NM_1,0,\dots,NM_n,\overline{0})$ is $(\theta,\beta)$-admissible where $\theta$ is similarly defined as in the Section \ref{greaterthanone}.

Observe that $T(1) = \theta - \lfloor \theta\rfloor = \theta-NM_0$ by Ineq. (\ref{eq:4}). Since $0<\beta<1$, then 
\begin{align*}
T^{2j-1}(1) & = \mathbb{T}^{j-1}_{\theta\beta}(\theta-NM_0)\\
T^{2j}(1) & = \beta T^{2j-1}(1) - 0 \\
&= \beta \mathbb{T}^{j-1}_{\theta\beta}(\theta-NM_0),
\end{align*}
for $j\in \mathbb{N}$ where $\mathbb{T}_{\theta\beta}$ is the (1-dimensional) beta transformation on the interval [0,1] with the radix equal to $\theta\beta$.
 Hence, 
 \[d(\theta,\beta;1) = (NM_0,0,a_1,0,a_2,0,\dots)\]
 where $\mathfrak{d}_{\theta\beta}(\theta-NM_0)=(a_1,a_2,\dots)$.
 So, it suffices to show that 
 \[(NM_1,NM_2,\dots,NM_n,\overline{0})\]
 is $\theta\beta$-admissible. Indeed, choose $N$ large enough so that $N(M_0\beta-M_j)>1$ for all $j$.
 Then
 \[NM_j <  NM_0\beta-1\le \theta\beta - 1\]
 and 
 clearly, $\theta\beta - 1$ is less than or equal to the 
 first digit of $\mathfrak{d}_{\theta\beta}^*(1)$ (which is either $\lfloor \theta\beta\rfloor$ or $\theta\beta-1$).
 By \cite[Theorem~3]{Parry1960}, we get the desired result.

\subsection{Special Cases}

Let $\gamma=0$ and let $\alpha,\beta \in \mathbb{R}$ such that $|\alpha \beta|>1$.
If $\beta=p(\alpha)$ for some polynomial $p\in\Q[x]$, then $\alpha$ may have no nontrivial conjugates over $\Q(\beta)$, as in the case where $\beta = r\alpha$ for $0\neq r\in \mathbb{R}$. 
However, if $d(\alpha,\beta;1)$ is eventually periodic, say $(a_1,\dots,a_m,\overline{a_{m+1},\dots,a_{m+k}})$ where $m$ and $k$ are even, then $\alpha$ is an algebraic number (c.f. Eq. (\ref{eq:1})).

Let $\lambda$ be a nontrivial conjugate of $\alpha$ over $\Q$. For brevity, we write $T^j(1)$ as $T^j$. 
As in Section \ref{periodic}, we have 
            \[0=1+\sum_{j=1}^\infty\dfrac{T^{2j}}{(\lambda p(\lambda))^j}+\left(\dfrac{p(\alpha)-p(\lambda)}{\alpha-\lambda}\right)\sum_{j=1}^\infty \dfrac{\lambda T^{2j-1}}{(\lambda p(\lambda))^j}.\]
            Suppose $p(x)=rX$ where $1\leq r\in\Q$. Then $p(\alpha)-p(\lambda)=r(\alpha-\lambda)$. Hence, 
            \begin{align*}
                0&= 1+\sum_{j=1}^\infty\dfrac{T^{2j}}{( r\lambda^2)^j}+r\sum_{j=1}^\infty \dfrac{\lambda T^{2j-1}}{(r\lambda^2)^j}\\&=1+\sum_{j=1}^\infty\dfrac{T^{2j}}{ r^j\lambda^{2j}}+\sum_{j=1}^\infty \dfrac{T^{2j-1}}{r^{j-1}\lambda^{2j-1}}.
            \end{align*}
            
            In this case, if $|\lambda|>1$, then $\lambda\in\G(1,1)^{-1}=\overline{\Phi(1,1)}\setminus\overline{\mathbb{D}}$. Thus, $\lambda$ is a nontrivial Galois conjugate of some Parry number. Consequently, $|\lambda|\leq (1+\sqrt{5})/2$. The following table provides a summary of the bounds for some $p(X)$.
            
             \begin{table}[h!]
    \centering
    \begin{tabular}{|c|c|c|}
         \hline 
         $p(X)$&Field&Bound \\\hline
         $rX$ where $1\geq r\in \Q$&$\Q$&$(1+\sqrt{5})/2$ \\\hline
         $-rX$ where $r>0$&$\Q(r)$&$\min\{\kappa/r,\kappa\}$ where $\kappa^3-2\kappa^2+\kappa=1$.\\\hline
         $rX^n$ where $r>0$&$\Q(r)$&$\max\{\Upsilon(n),\Upsilon(n)/\sqrt[n+1]{r}\}$ where\\ &&$\Upsilon(n)=(\alpha^{n-1}+\sqrt{\alpha^{2n-2}+4})/2$\\\hline
         
    \end{tabular}
    \caption{Bounds on the conjugates of $\alpha$ over the given  field when $d(\alpha,\beta;1)$ is eventually periodic}
    \label{tab:tablebounds}
\end{table}

\section*{Acknowledgments}

\bibliographystyle{siam}
\bibliography{references.bib}

\end{document}